\newtheorem*{maintheorem}{Main Theorem}
\newtheorem{theorem}{Theorem}[section]
\newtheorem{lemma}[theorem]{Lemma}
\newtheorem{corollary}[theorem]{Corollary}	
\newtheorem{proposition}[theorem]{Proposition}
\newtheorem{claim}[theorem]{Claim}
\theoremstyle{definition}
\newtheorem{example}[theorem]{Example}
\newtheorem{definition}[theorem]{Definition}
\newtheorem{remark}[theorem]{Remark}
\newcommand{\lf}{\lfloor}
\newcommand{\rf}{\rfloor}
\newcommand{\Lf}{\left\lfloor}
\newcommand{\Rf}{\right\rfloor}
\newcommand{\Floor}[1]{\Lf{#1}\Rf}
\newcommand{\thirdfloor}[1]{\Floor{\frac{#1}{3}}}
\newcommand{\config}{\textsc{Conf}}
\newcommand{\Lk}{\text{Lk}}
\newcommand{\J}{\mathcal{J}}
\newcommand\addvmargin[1]{
  \node[fit=(current bounding box),inner ysep=#1,inner xsep=0]{};
}
\begin{document}

\title[Connectivity at infinity for the braid group of a $K_{m,n}$]{Connectivity at infinity for the braid group of a complete bipartite graph}

\author[Mazur]{Kristen~Mazur}
\email{kmazur@elon.edu}
\address{Department of Mathematics,
Elon University,
Elon NC 27244}

\author[McCammond]{Jon~McCammond} 
\email{jon.mccammond@math.ucsb.edu}
\address{Dept. of Mathematics,
  University of California, Santa Barbara, CA 93106}

\author[Meier]{John~Meier}
\email{meierj@lafayette.edu}
\address{Department of Mathematics,
Lafayette College,
Easton PA 18042}

\author[Rohatgi]{Ranjan~Rohatgi}
\email{rrohatgi@saintmarys.edu}
\address{Department of Mathematics and Computer Science,
Saint Mary's College,
Notre Dame IN 46556}

\date{\today}

\begin{abstract}
The graph braid group of a complete bipartite graph is the fundamental group of a configuration space of points on the graph, which is a CAT(0) cube complex.  We combine an analysis of the topology of links of vertices in this complex, the description of a hidden symmetry among the parameters, and known results from the literature to explicitly compute the exact degree to which these complexes and groups are connected at infinity.
\end{abstract}

\subjclass[2010]{20F65, 57M07}

\maketitle

\section{Introduction}

The classical braid group $B_r$ is the fundamental group of the configuration space of $r$ disjoint indistinguishable points in a disk.  There is a similar combinatorial construction that begins with a fixed natural number $r$ and a finite, simple graph $\Gamma$ and then produces an associated cell complex corresponding to $r$ points moving on $\Gamma$ where the points stay reasonably far apart. (See for example \cite{AbramsGhrist} and \cite{FarleySab}.) Following Abrams and Ghrist, we refer to the $r$ points on the graph $\Gamma$ as \emph{robots}.   The fundamental group of this cell complex is called a graph braid group, and in this article we describe the connectivity at infinity of graph braid groups when the graph $\Gamma$ is a complete bipartite graph.  The case of a complete graph is settled in~\cite{MZ}. 

In order to determine connectivity at infinity for the graph braid group we describe the connectivity at infinity of the universal cover of the configuration space mentioned above. Recall that a cell complex $K$ is \emph{$n$-connected} if its homotopy groups $\pi_i(K)$ are trivial for $0 \leq i \leq n$.  Thus being $0$-connected is equivalent to being path connected, and being $1$-connected is often referred to as being \emph{simply connected}.  Roughly speaking, a contractible complex is said to be \emph{$n$-connected at infinity} if the complements of finite subcomplexes are $n$-connected. (A rigorous definition is given in Section~\ref{sec:configurations}.)  The property of being $0$-connected at infinity is often referred to as being one-ended, as there is essentially one connected component in the complement of finite subcomplexes.  The property of being $1$-connected at infinity is often described as being \emph{simply connected at infinity}. If $K$ is a finite, aspherical complex then the connectivity at infinity of the universal cover $\widetilde{K}$ is a quasi-isometry invariant of $K$, and it is directly related to the group cohomology of $\pi_1(K)$ and duality properties of groups. 

We begin by stating our main result.

\begin{maintheorem}
Consider $r$ robots on a complete bipartite graph $K_{n,N}$. Let $R$ be the number of open vertices on $K_{n,N}$ so that $r+R=n+N$ is the total number of vertices.  Moreover, to avoid trivial cases, assume that $r$, $n$, $N$ and $R$ are all at least $2$.  Let $\ell_0 = \min\{r,R,n,N\}$, $\ell_1=\min\{r,R\}+\min\{n,N\}+1$, and $\ell_2=r+R=n+N$.  If 
\[\ell = \min\left\{
\ell_0, \thirdfloor{\ell_1}, \thirdfloor{\ell_2}
\right\},\]
then the universal cover of the combinatorial configuration space of $r$ robots on $K_{n,N}$ is $(\ell - 2)$-connected at infinity but not $(\ell - 1)$-connected at infinity.  
\end{maintheorem}

For example, if we have two robots moving on a $K_{n,N}$ with $n$ and $N$ each greater or equal to 3,  then $\ell = 2$ and we see that the universal cover of the configuration space is one-ended. 

\begin{remark} The universal cover of a configuration space can be thought of as the space of all possible robot motions on the graph, and so determining the connectivity at infinity is essentially a result about robot motion planning when certain motions are blocked for some finite period of time.
\end{remark}

In order to prove our Main Theorem, we use a known result that relates the connectivity at infinity to the connectivity of vertex links.  The following theorem, which is a combination of results proved in \cite{BradyMeier} and \cite{converse}, states this close connection.

\begin{theorem}[Vertex links and infinity]\label{thm:link-infinity}
Let $X$ be a finite, locally \emph{CAT(0)} cube-complex with universal cover $\widetilde{X}$, and fix an integer $n \ge -1$. Assume the link of every vertex in $X$ is $n$-connected, and remains $n$-connected when any closed simplex is removed. Further, assume that the link of some vertex $v$ has $H_{n+1}(\Lk(v)) \neq 0$. Then $\widetilde{X}$ is $n$-connected at infinity but not $(n+1)$-connected at infinity.
\end{theorem}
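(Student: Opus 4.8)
The plan is to recognize the statement as an amalgam of two results already in the literature and to supply the glue. The forward implication — sufficiently connected links force connectivity at infinity — is the CAT(0)-cube-complex form of the Brady--Meier machinery of \cite{BradyMeier}, while the reverse implication — a persistent homology class in some link blocks one further degree of connectivity at infinity — is the converse of \cite{converse}. So I would first record the harmless observation that the links of vertices in $X$ and in $\widetilde X$ coincide, since $\widetilde X\to X$ is a local isomorphism of cube complexes, and then treat the two directions separately. The point that makes them fit together is that the hypothesis ``$\Lk(v)$ is $n$-connected and remains $n$-connected after deleting any closed simplex'' is a \emph{local Cohen--Macaulay-type} condition: it is precisely what the local-to-global argument needs in order to push $n$-connectivity out to infinity, and the first way it can fail is $H_{n+1}(\Lk(v))\ne 0$, which is why the two thresholds coincide.

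For the forward direction I would filter the contractible complex $\widetilde X$ by the metric balls $B_k$ about a basepoint vertex $v_0$. The CAT(0) geodesic retraction toward $v_0$ deformation retracts the complement of a ball onto the corresponding metric sphere, so $n$-connectivity at infinity reduces to controlling the connectivity of these spheres, i.e. to showing that for each $k$ there is a $k'\ge k$ with $\widetilde X\setminus B_{k'}\hookrightarrow\widetilde X\setminus B_k$ trivial on $\pi_i$ for all $i\le n$. This becomes a local computation at each vertex $v$: the piece of the sphere seen near $v$ is modeled on $\Lk(v)$ with the closed face spanned by the edges pointing toward $v_0$ removed, and pushing the ball out past $v$ cones off that face. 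The punctured-link clause of the hypothesis makes each such sphere-piece $n$-connected, the full-link clause controls how the pieces reassemble into the sphere, and the standard Morse lemma — coning off an $n$-connected subcomplex changes no $\pi_i$ with $i\le n$ — then yields the required triviality of the bonding maps. I expect this direction to be the main obstacle, since one must run the local analysis uniformly over every vertex, check that the relevant subcomplexes of the links are full so the connectivity hypotheses genuinely apply to them, and organize the countably many local moves into a single control of all of $\pi_0,\dots,\pi_n$ at once.

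For the reverse direction I would show directly that a nonzero class $z\in H_{n+1}(\Lk(v))$ forces $\widetilde X$ to fail $(n+1)$-connectivity at infinity. Realizing $z$ by an $(n+1)$-cycle and pushing it slightly off $v$ gives a nontrivial element of $\tilde H_{n+1}\bigl(\widetilde X\setminus B_{d(v_0,v)-1}\bigr)$ supported near $v$. Since $\pi_1(X)$ acts cocompactly on $\widetilde X$, the orbit of $v$ contains vertices arbitrarily far from $v_0$, each carrying such a class; a local-to-global (Mayer--Vietoris / compactly-supported-cohomology) computation for the acyclic complex $\widetilde X$ then packages these into a nonzero class in $H^{n+2}_c(\widetilde X)$ — equivalently a nonzero element of $\varprojlim_k\tilde H_{n+1}(\widetilde X\setminus B_k)$ — which is exactly the obstruction to $(n+1)$-connectivity at infinity. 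Combining the two directions proves the theorem; the only loose end is the boundary case $n=-1$, where the hypothesis reads ``every link is nonempty and stays nonempty after deleting a vertex'' and $H_0(\Lk(v))\ne 0$ means ``some link is disconnected'', recovering the usual one-ended-versus-many-ended dichotomy and matching the conventions of \cite{BradyMeier} and \cite{converse}.
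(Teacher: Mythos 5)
The first thing to say is that the paper does not prove this theorem at all: it is imported as a black box, stated as ``a combination of results proved in \cite{BradyMeier} and \cite{converse}.'' So your identification of the provenance --- Brady--Meier for the positive direction, the converse reference for the obstruction --- is exactly the paper's own treatment, and your forward-direction sketch (filter $\widetilde X$ by balls about $v_0$, retract complements onto metric spheres, and control the change at each vertex by coning off the link with the closed simplex of inward-pointing edges removed) is a fair outline of the Brady--Meier argument, though you rightly flag that it remains an outline.

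The genuine gap is in your reverse direction. You propose to realize $z\in \tilde H_{n+1}(\Lk(v))$ near $v$, transport it to the orbit vertices $gv$ arbitrarily far from $v_0$, and package these far-away copies into a class that persists in the complement of a fixed compact set. This fails: the cycle $z_g\subset\Lk(gv)$ bounds its cone to $gv$ inside the closed star of $gv$, which is disjoint from any fixed compact $C$ once $gv$ is far away, so $z_g$ is already zero in $H_{n+1}(\widetilde X\setminus C)$. (Test it at $n=-1$ with $\widetilde X$ a tree: the $0$-cycle $[gu]-[gw]$, with $gu,gw$ adjacent to a distant $gv$ on opposite sides, is nontrivial in $\tilde H_0(\widetilde X\setminus\{gv\})$ but trivial in $\tilde H_0(\widetilde X\setminus\{v\})$, since both points connect to $gv$ avoiding $v$.) The actual mechanism is the opposite motion: fix $C$ to be a neighborhood of the single vertex $v$, observe via excision and contractibility of $\widetilde X$ that $\tilde H_{n+1}(\Lk(v))\cong \tilde H_{n+1}(\widetilde X\setminus\{v\})$, and then push that one class outward past an arbitrary compact $D$ \emph{without changing its image} in $H_{n+1}(\widetilde X\setminus C)$; this outward push is exactly where the $n$-connectivity of all the other links and punctured links is used a second time. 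One must also convert the persistent homology class into a genuine map of $S^{n+1}$ that fails to bound, which requires Hurewicz in the complements and hence the already-established $n$-connectivity at infinity --- so the two directions are intertwined rather than independent. Your Mayer--Vietoris/$H_c^{n+2}$ packaging points at the right invariant but is asserted rather than carried out, and as stated it rests on the incorrect orbit-translation step; it also conflates nonvanishing of an inverse limit with failure of pro-triviality, which is what the definition of connectivity at infinity (Definition~\ref{def:conn-infinity}) actually requires.
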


In \cite{birthday}, Abrams and Ghrist establish the fact that a combinatorial  configuration space for $r$ robots on any finite simple graph is a finite, locally $CAT(0)$ cube-complex. Hence, in order to prove the Main Theorem we need to determine the maximum connectivity shared by every vertex link in the configuration space of $r$ robots on $K_{n,N}$, which is determined by the minimum non-connectivity of some vertex link. We show that these links  are chessboard complexes or joins of chessboard complexes, which are well studied combinatorial objects.  Moreover, we establish some surprising isomorphisms between finite covers of these configuration spaces that allow us to reduce our analysis to the case where $r\leq n\leq N\leq R$, and then successfully compute the minimum non-connectivity in this context.

The paper is organized as follows. Section~\ref{sec:configurations} describes the configuration spaces and connectivity at infinity. Section~\ref{sec:symmetries} discusses  hidden symmetries that give isomorphisms that allow us to reduce to the case where the parameters are ordered as follows: $r \leq n\leq N\leq R$. We then describe the structure of the vertex links and show that they are joins of chessboard complexes in Section~\ref{sec:vertex-links}, and finally we prove the Main Theorem in Section~\ref{sec:computations}.

\section{Configuration Spaces}\label{sec:configurations}

Throughout this paper we consider $r$ robots moving on a complete bipartite graph $K_{n,N}$.  In order for it to be possible for the robots to simultaneously occupy distinct vertices of $K_{n,N}$, it must be the case that $r \leq n + N$ and we assume this condition on the parameters always holds.  

We think of the complete bipartite graph $K_{n,N}$ as a simplicial complex whose non-empty simplices $\sigma$ are its vertices and edges.  If we have $r$ robots moving on $K_{n,N}$ then each robot is on a vertex or an edge.  In order to prohibit collisions, we assume that not only are no two robots allowed on any one vertex or edge, but also that if a robot is on an edge then both bounding vertices are robot-free.  Since we will not distinguish between different locations on a single edge, every position of $r$ robots is then determined by a collection of $r$ disjoint vertices and edges. 

\begin{definition}[Discrete configuration spaces]\label{def:config-spaces}
For each complete bipartite graph $K = K_{n,N}$ we define the \emph{discrete configuration space} $\config_r(n,N)$ to be the  subcomplex of \[\underbrace{K \times \ldots \times K}_{r \text{ copies}}\] consisting of products of vertices and edges $\sigma_1 \times \cdots \times \sigma_r$ where for each pair of distinct, closed simplices $\sigma_i \cap \sigma_j = \emptyset$.
\end{definition}

\begin{remark}[Vertex versus $0$-cell]\label{rem:vertex-vs-0-cell}
Because we build configuration spaces based on graphs, the terms ``vertex'' and ``edge'' are potentially ambiguous.   Do they refer to a piece of the graph or a piece of the configuration space? In this article, we describe the graph $K_{n,N}$ as having ``vertices'' and ``edges'' while the configuration spaces have ``$i$-cells'' instead.  We make this distinction so that it is always clear to the reader which of the two types of cell complexes we are referring to.  The only exception we make to this convention is the standard terminology ``vertex link'' (see Remark~\ref{rem:vertex-link}).
\end{remark}

\begin{remark}[Really small parameters]\label{rem:AtLeast2}
The cases where $\min\{r, n, N\} = 1$ are elementary to resolve, and we remove them from the statements of our main results. When $r=1$, the configuration space is the same as the underlying graph 
\[
\config_1(n,N) \simeq K_{n,N}
\]
and the graph braid group is its fundamental group, a free group. The symmetries described in Section~\ref{sec:symmetries} show that these graph braid groups are also free groups when $n=1$ or $N=1$.  For the remainder of the article we assume that $\min\{r,n,N\} \geq 2$.
\end{remark}

\begin{example}[Small parameters]\label{ex:small-parameters}
One of the classic examples is the configuration space $\config_2(3,3)$ for $2$ robots on a $K_{3,3}$. Because it is a subset of $K_{3,3} \times K_{3,3}$, $\config_2(3,3)$ is a $2$-dimensional complex.  The $0$-cells of $\config_2(3,3)$ correspond to having both robots on vertices of $K_{3,3}$; $1$-cells correspond to one robot sitting on a vertex while the other moves along an edge between two vertices; and the $2$-cells, which are squares, correspond to both robots moving along two distinct edges with four distinct vertices.  The neighborhood of a $0$-cell $v \in \config_2(3,3)$ is determined by whether the associated configuration has two robots  on vertices from a single side of $K_{3,3}$ or on different sides.  If both robots are on the same side, then there are $6$ squares joined in a cycle about $v$.  If the robots are on opposite sides then there are $4$ squares joined in a cycle about $v$.  Once this local structure has been determined it is not too difficult to establish that $\config_2(3,3)$ is a closed hyperbolic surface.  For the details, see  \cite{birthday}. We note that it is  uncommon to have configuration space $\config_r(n,N)$ be a manifold.  See Example~\ref{ex:NotMfd}. 
\end{example}

In general, since the configuration space $\config_r(n,N)$ is a subcomplex of a product of graphs, it has a natural cell structure where each cell is a product of vertices and edges.

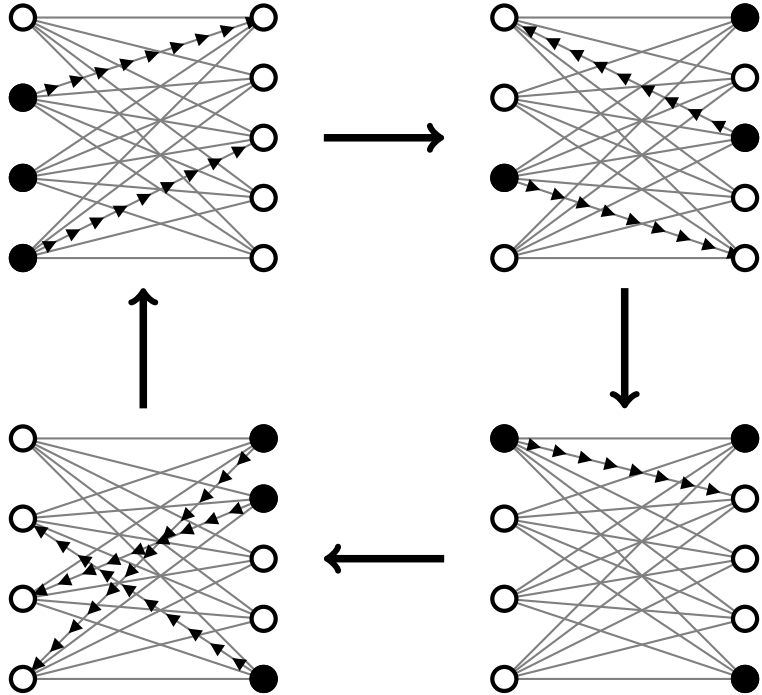
\begin{figure}[hptb]
\begin{tikzpicture} [scale=0.8, thick]
\usetikzlibrary{snakes}
	
\draw [->, line width=.1cm] (-1,2)--(1,2);
\draw [<-, line width=.1cm] (-1,-5)--(1,-5);
\draw [->, line width=.1cm] (4,-.5)--(4,-2.5);
\draw [<-, line width=.1cm] (-4,-.5)--(-4,-2.5);

\begin{scope} [xshift=-4cm]
\foreach \x in {0,4/3,8/3,4} \foreach \y in {0,1,2,3,4} 
\draw [gray] (-2,\x)--(2,\y);
\end{scope}

\begin{scope} [xshift=4cm]
\foreach \x in {0,4/3,8/3,4} \foreach \y in {0,1,2,3,4} 
\draw [gray] (-2,\x)--(2,\y);
\end{scope}

\begin{scope} [xshift=-4cm,yshift=-7cm]
\foreach \x in {0,4/3,8/3,4} \foreach \y in {0,1,2,3,4} 
\draw [gray] (-2,\x)--(2,\y);
\end{scope}

\begin{scope} [xshift=4cm, yshift=-7cm]
\foreach \x in {0,4/3,8/3,4} \foreach \y in {0,1,2,3,4} 
\draw [gray] (-2,\x)--(2,\y);
\end{scope}

\draw [ultra thick, snake=triangles] (-6,0)--(-2,2);
\draw [ultra thick, snake=triangles] (-6,8/3)--(-2,4);

\begin{scope} [xshift=8cm]
\draw [ultra thick, snake=triangles] (-2,2)--(-6,4);
\draw [ultra thick, snake=triangles] (-6,4/3)--(-2,0);
\end{scope}

\begin{scope} [xshift=8cm,yshift=-7cm]
\draw [ultra thick, snake=triangles] (-6,4)--(-2,3);
\end{scope}

\begin{scope} [yshift=-7cm]
\draw [ultra thick, snake=triangles] (-2,4)--(-6,0);
\draw [ultra thick, snake=triangles] (-2,3)--(-6,4/3);
\draw [ultra thick, snake=triangles] (-2,0)--(-6,8/3);
\end{scope}

\begin{scope} [xshift=-4cm]
\foreach \x in {0,4/3,8/3,4} \draw [ultra thick, fill=white] (-2,\x) circle [radius=0.2];
\foreach \x in {0,1,2,3,4} \draw [ultra thick, fill=white] (2,\x) circle [radius=0.2];
\end{scope}

\begin{scope} [xshift=4cm]
\foreach \x in {0,4/3,8/3,4} \draw [ultra thick, fill=white] (-2,\x) circle [radius=0.2];
\foreach \x in {0,1,2,3,4} \draw [ultra thick, fill=white] (2,\x) circle [radius=0.2];
\end{scope}

\begin{scope} [xshift=-4cm,yshift=-7cm]
\foreach \x in {0,4/3,8/3,4} \draw [ultra thick, fill=white] (-2,\x) circle [radius=0.2];
\foreach \x in {0,1,2,3,4} \draw [ultra thick, fill=white] (2,\x) circle [radius=0.2];
\end{scope}

\begin{scope} [xshift=4cm, yshift=-7cm]
\foreach \x in {0,4/3,8/3,4} \draw [ultra thick, fill=white] (-2,\x) circle [radius=0.2];
\foreach \x in {0,1,2,3,4} \draw [ultra thick, fill=white] (2,\x) circle [radius=0.2];
\end{scope}

\draw [ultra thick, fill=black] (-6,0) circle [radius=0.2];
\draw [ultra thick, fill=black] (-6,4/3) circle [radius=0.2];
\draw [ultra thick, fill=black] (-6,8/3) circle [radius=0.2];

\begin{scope} [xshift=8cm]
\draw [ultra thick, fill=black] (-2,2) circle [radius=0.2];
\draw [ultra thick, fill=black] (-6,4/3) circle [radius=0.2];
\draw [ultra thick, fill=black] (-2,4) circle [radius=0.2];
\end{scope}

\begin{scope} [xshift=8cm,yshift=-7cm]
\draw [ultra thick, fill=black] (-6,4) circle [radius=0.2];
\draw [ultra thick, fill=black] (-2,0) circle [radius=0.2];
\draw [ultra thick, fill=black] (-2,4) circle [radius=0.2];
\end{scope}

\begin{scope} [yshift=-7cm]
\draw [ultra thick, fill=black] (-2,3) circle [radius=0.2];
\draw [ultra thick, fill=black] (-2,0) circle [radius=0.2];
\draw [ultra thick, fill=black] (-2,4) circle [radius=0.2];
\end{scope}
\end{tikzpicture}
\caption{An example of three robots moving on a $K_{4,5}$.\label{fig:RobotMotion}}
\end{figure}

\begin{example}[A loop]
The robot motion depicted in Figure~\ref{fig:RobotMotion} shows a loop in the cubical complex $\config_3(4,5)$ associated to three robots on the complete bipartite graph $K_{4,5}$.  The four configurations depicted, with the three black dots representing the locations of the robots, correspond to $0$-cells in $\config_3(4,5)$.  The edges with arrows indicate how some subset of the robots are about to move as we transition to the next figure.

If we start at the $0$-cell corresponding to the configuration shown on the lower right, one robot is moving and this means this portion of the loop traverses an edge in the configuration space to the $0$-cell corresponding to the configuration shown on the lower left.  From here, three robots are moving. This means that this portion of the loop traverses a diagonal of a $3$-cube in the configuration space to the $0$-cell corresponding to the configuration shown on the upper left.  The next robot motion corresponds to a portion of the loop which traverses the diagonal of a square (since exactly two robots are moving), as does the final motion.
\end{example}

Because the cells are products of vertices and edges, the configuration space $\config_r(n,N)$ is a cubical cell complex, and assigning the metric of unit Euclidean cubes to each cell gives $\config_r(n,N)$ a piecewise Euclidean metric. A well-known result of Gromov can be applied to prove that the universal cover of $\config_r(n,N)$ is non-positively curved. (See \cite{BridsonHaefliger} for background information about CAT(0) complexes and Gromov's Theorem.) See also Theorem~3.3 and Corollary~3.4 in \cite{birthday}.

\begin{theorem}[CAT(0) cube complex]\label{thm:cat-cube-cplx}
The space $\config_r(n,N)$ is a locally \emph{CAT(0)} cube complex, which makes $X=\config_r(n,N)$ a classifying space, i.e. it is a $K(G,1)$ where $G= \pi_1(X)$ is its fundamental group.
\end{theorem}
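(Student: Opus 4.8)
The plan is to verify Gromov's combinatorial non-positive curvature criterion for cube complexes and then invoke the Cartan--Hadamard theorem; this is essentially the argument behind Theorem~3.3 and Corollary~3.4 of \cite{birthday}, specialized to $X = \config_r(n,N)$. Recall that a cube complex built from finitely many shapes of cells and equipped with the piecewise Euclidean metric in which every $k$-cube is a unit Euclidean cube is locally CAT(0) if and only if the link of each vertex is a \emph{flag} simplicial complex, i.e. every set of pairwise-adjacent vertices of the link spans a simplex (see \cite{BridsonHaefliger}). So the real content is an understanding of the vertex links of $X$ together with a verification of this flag condition.

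First I would describe the link of a vertex $v$ of $X$. A vertex of $X$ is a placement of the $r$ robots on $r$ pairwise disjoint vertices of $K_{n,N}$. An edge of $X$ at $v$ records an \emph{elementary move}: choose one robot, sitting on a vertex $a$, choose an edge $e=\{a,b\}$ of $K_{n,N}$ whose far endpoint $b$ is robot-free, and slide that robot onto $e$; by the collision convention of Definition~\ref{def:config-spaces} this move is legal exactly when the closed edge $e$ is disjoint from every vertex occupied by the remaining robots. A $k$-cube of $X$ containing $v$ is a product $\sigma_1\times\cdots\times\sigma_r$ in which $k$ of the $\sigma_i$ are closed edges incident to the corresponding $w_i$ and the rest are the vertices $w_j$, all $r$ cells being pairwise disjoint. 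Consequently $\Lk(v)$ has one vertex per elementary move at $v$, and a set of such moves spans a simplex of $\Lk(v)$ precisely when the corresponding closed edges, together with the stationary robots' vertices, are pairwise disjoint cells of $K_{n,N}$.

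Next I would check flagness. Suppose elementary moves $m_1,\dots,m_k$ at $v$ are pairwise adjacent in $\Lk(v)$, and let $e_t$ be the closed edge traversed by $m_t$. Pairwise adjacency means in particular that the $m_t$ involve distinct robots (two distinct edges at a common robot share that robot's vertex, hence cannot be disjoint) and that $e_s\cap e_t=\emptyset$ for $s\ne t$; the disjointness of each $e_t$ from the stationary vertices, and of the stationary vertices from one another, is automatic from $m_t$ being a legal move and from $v$ being a vertex of $X$. Since disjointness of closed cells is a condition on pairs, it follows at once that the $k$ moving edges $e_1,\dots,e_k$ together with the stationary vertices form a collection of $r$ pairwise disjoint closed cells of $K_{n,N}$, i.e. an honest $k$-cube of $\config_r(n,N)$, whose corner at $v$ is exactly the simplex on $m_1,\dots,m_k$. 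Hence every clique in the $1$-skeleton of $\Lk(v)$ bounds a simplex, $\Lk(v)$ is flag, and Gromov's criterion gives that $X$ is locally CAT(0).

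Finally, $X$ is a finite cube complex, hence a complete geodesic space, and it is connected (any two robot placements are joined by a sequence of elementary moves, using that $K_{n,N}$ is connected and, under our standing hypothesis, that a free vertex is always available to move into); by the globalization theorem for CAT(0) spaces its universal cover $\widetilde X$, with the induced length metric, is CAT(0), hence uniquely geodesic and contractible. A connected complex with contractible universal cover is aspherical, so $X$ is a $K(G,1)$ with $G=\pi_1(X)$, as claimed. The one step requiring genuine care is the flag-condition verification: one must be scrupulous that ``pairwise adjacent in the link'' encodes \emph{only} pairwise disjointness of closed cells in $K_{n,N}$, with no hidden global constraint on three or more moves at once, so that joint compatibility is forced --- this is precisely where the blocked-endpoint convention of Definition~\ref{def:config-spaces} does its work.
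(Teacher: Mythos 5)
Your argument is correct and takes essentially the same route the paper does: the paper gives no proof of its own, instead invoking Gromov's link condition and citing Theorem~3.3 and Corollary~3.4 of \cite{birthday}, and your verification that vertex links of $\config_r(n,N)$ are flag (because a cube is characterized by \emph{pairwise} disjointness of closed cells) followed by Cartan--Hadamard is precisely the content of those cited results. No gaps to report.
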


Knowing that $\config_r(n,N)$ is a $K(G,1)$ has a number of consequences, as topological features of classifying spaces are invariants of the fundamental group. In particular, the connectivity at infinity of the universal cover of $\config_r(n,N)$ is an invariant of the fundamental group of $\config_r(n,N)$.  (See \cite{brown} and \cite{geog} for results along these lines.) 

We conclude this section by recalling the definition of connectivity at infinity. Let $\widetilde{K}$ be the universal cover of a finite, aspherical cell complex $K$. The topology at infinity of $\widetilde{K}$ consists of those topological features that are persistently present in the complements of finite subcomplexes of $\widetilde{K}$.  In particular, we have the following definition.

\begin{definition}[Connectivity at infinity]\label{def:conn-infinity}
The universal cover $\widetilde{K}$ is \emph{$n$-connected at infinity} if given any compact $C \subset \widetilde{K}$ there is a compact $D \subset \widetilde{K}$ such that any map $\phi: S^i \rightarrow \widetilde{K}\setminus D$ extends to a map $\hat{\phi}: B^{i+1} \rightarrow \widetilde{K}\setminus C$ for all $-1 \leq i \leq n$.  
\end{definition}

Euclidean space, $\mathbb{R}^n$, is $(n-2)$-connected at infinity but not $(n-1)$-connected at infinity. We note that the usual definition of being $(-1)$-connected is that the space is non-empty, which implies that being $(-1)$-connected at infinity is equivalent to the statement that $\widetilde{K}$ is infinite.  As we remarked in the introduction, the property of being $0$-connected at infinity is commonly referred to as having \emph{one-end}, and being $1$-connected at infinity is usually termed \emph{simply connected at infinity}.

\begin{remark}[Other topological invariants] Other topological invariants of graph braid groups have been studied. The work in \cite{KoPark} highlights just how complicated it is to determine even the first homology group for graph braid groups in terms of the structure of the underlying graph. Almost all results about homology and cohomology occur only in specific contexts, for example, when the underlying graph is a tree (see for example \cite{FarleySab}, \cite{FarleyHomology}, \cite{FarleyCohoTree}, \cite{Scheirer}, and \cite{Ramos}).
\end{remark}

\section{Hidden symmetries}\label{sec:symmetries}

In this section we describe three symmetries among the parameters used to define $\config_r(n,N)$.  The first two are relatively obvious, while the third is somewhat more surprising.  The main purpose of introducing these three symmetries is to be able to reduce to the case where the parameters satisfy the inequalities $r \leq n \leq N$.  In other words we can restrict our attention to those cases where all robots fit on the smaller side of the bipartite graph.  These inequalities are crucial to the explicit computations carried out in Section~\ref{sec:computations}.

The first symmetry is a result of the obvious left-right symmetry of $K_{n,N}$.

\begin{lemma}[Left and right]\label{lem:swap-left-right}
The configuration spaces $\config_r(n,N)$ and $\config_r(N,n)$ are
isomorphic.
\end{lemma}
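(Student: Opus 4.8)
The plan is to exhibit an explicit graph isomorphism $K_{n,N} \to K_{N,n}$ and then argue that any graph isomorphism induces a cellular isomorphism of the associated configuration spaces. Write the vertex set of $K_{n,N}$ as a disjoint union $A \sqcup B$ with $|A| = n$ and $|B| = N$, the edges being exactly those pairs with one endpoint in $A$ and one in $B$; similarly write the vertices of $K_{N,n}$ as $A' \sqcup B'$ with $|A'| = N$ and $|B'| = n$. Choosing bijections $A \to B'$ and $B \to A'$ and combining them gives a bijection $f$ on vertex sets. Since a pair of vertices spans an edge in $K_{n,N}$ precisely when it meets both $A$ and $B$, and likewise in $K_{N,n}$ with $A'$ and $B'$, the map $f$ sends edges to edges and non-edges to non-edges, so $f$ is an isomorphism of simplicial complexes.

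Next I would promote $f$ to a map of configuration spaces. Its $r$-fold Cartesian power is a cellular homeomorphism $(K_{n,N})^r \to (K_{N,n})^r$ carrying a product cell $\sigma_1 \times \cdots \times \sigma_r$ to $f(\sigma_1) \times \cdots \times f(\sigma_r)$. Because $f$ is a simplicial isomorphism it preserves intersections of closed simplices, so $\sigma_i \cap \sigma_j = \emptyset$ if and only if $f(\sigma_i) \cap f(\sigma_j) = \emptyset$. Hence this product map restricts to a bijection between the cells of $\config_r(n,N)$ and the cells of $\config_r(N,n)$ in the sense of Definition~\ref{def:config-spaces}, and since it is the restriction of a cellular homeomorphism of the ambient products it automatically respects face relations; its inverse is induced by $f^{-1}$ in the same way. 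Therefore it is a cellular isomorphism $\config_r(n,N) \cong \config_r(N,n)$.

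There is essentially no obstacle here: the only point worth a sentence is checking that the subcomplex condition is preserved on the nose, including the convention that a robot occupying an edge blocks both of its endpoints. This is automatic, because ``two closed simplices of the graph are disjoint'' is a purely combinatorial incidence condition that any graph isomorphism respects, and nothing in the argument uses the bipartite structure beyond the trivial fact that interchanging the two sides of $K_{n,N}$ is a graph isomorphism.
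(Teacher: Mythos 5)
Your proof is correct and matches the paper's (implicit) reasoning: the paper gives no formal proof, simply attributing the lemma to ``the obvious left-right symmetry of $K_{n,N}$,'' which is exactly the graph isomorphism swapping the two sides that you construct and then push through the configuration-space construction. Your write-up just makes explicit the routine functoriality check that the paper leaves to the reader.
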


The second symmetry follows from the introduction of ghosts whose motions mirror those of the robots.

\begin{definition}[Ghosts] 
In the configuration space of $r$ robots on a $K_{n,N}$, we view every unoccupied vertex as being occupied by a \emph{ghost}.  If we let $R$ denote the number of ghosts, then $r+R = n+N =T$ where $T$ is the total number of vertices of $K_{n,N}$.  In addition, when a robot moves from one vertex to an ``unoccupied'' vertex we view the ghost at the unoccupied vertex as moving in the opposite direction so that it exchanges places with the robot.
\end{definition}

Once ghosts are introduced so that they move in conjunction with and opposite to the motion of the robots, it becomes clear that the robot configuration space corresponds in a natural way to another configuration space where the old ghosts are viewed as the new robots and the old robots are viewed as the new ghosts.  This proves the following lemma.

\begin{lemma}[Robots and ghosts]\label{lem:swap-robot-ghost}
Let $n + N = T$ and let $R = T-r$.  Then $\config_r(n,N)$ is isomorphic to $\config_R(n,N)$.  
\end{lemma}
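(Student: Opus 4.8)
The plan is to turn the informal robot--ghost correspondence into an explicit cellular bijection $\Phi\colon \config_r(n,N)\to\config_R(n,N)$ and then verify that it is an isomorphism of cubical complexes. Note first that $R=T-r\le T=n+N$, so $\config_R(n,N)$ is defined. Fix a cell $C$ of $\config_r(n,N)$; it is recorded by $r$ pairwise-disjoint closed simplices of $K_{n,N}$, say $k$ edges $e_1,\dots,e_k$ (the robots in motion) and $r-k$ vertices (the robots at rest). Disjointness forces the $2k$ endpoints of the $e_i$ together with the $r-k$ resting vertices to be $r+k$ distinct vertices of $K_{n,N}$, so exactly $R-k$ vertices are unoccupied; call this set $F(C)$. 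Define $\Phi(C)$ to be the collection consisting of the same edges $e_1,\dots,e_k$ together with the $R-k$ vertices in $F(C)$. Each $e_i$ is disjoint from the other $e_j$ and, by definition of $F(C)$, from every vertex of $F(C)$, so $\Phi(C)$ is a genuine cell of $\config_R(n,N)$, of the same dimension $k$ as $C$; in particular $\Phi$ carries $0$-cells (a placement of all $r$ robots at rest) to $0$-cells (the placement of the $R$ ghosts on the complementary vertices), matching the intuition behind the construction.

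Next I would check bijectivity by exhibiting the inverse directly: let $\Psi\colon\config_R(n,N)\to\config_r(n,N)$ be given by the same recipe with the roles of $r$ and $R$ interchanged (keep the edges, occupy the vertices left free). Unwinding the definitions, the vertices occupied by $\Phi(C)$ are exactly the endpoints of the $e_i$ together with $F(C)$, so the vertices left free by $\Phi(C)$ are precisely the $r-k$ resting vertices of $C$; hence $\Psi(\Phi(C))=C$, and symmetrically $\Phi(\Psi(C'))=C'$ for every cell $C'$ of $\config_R(n,N)$.

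The one step that takes a little care --- and the place where the argument is more than bookkeeping --- is showing that $\Phi$ respects face relations, so that it is a cubical isomorphism. The faces of a $k$-cell $C=\{e_1,\dots,e_k\}\cup\{\text{resting vertices}\}$ are obtained by choosing, independently for each $i$, either to keep the edge $e_i=\{a_i,b_i\}$ or to collapse it to one of its two endpoints (keeping every resting vertex). Collapsing $e_i$ to the endpoint $a_i$ in $C$ frees the vertex $b_i$, and therefore corresponds under $\Phi$ to collapsing the edge $e_i$ of $\Phi(C)$ to its \emph{other} endpoint $b_i$; this endpoint swap is exactly the statement that a ghost traverses an edge in the direction opposite to the robot. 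Thus $\Phi$ maps the face poset of $C$ bijectively onto the face poset of $\Phi(C)$, the bijection being ``same set of collapsed edge-coordinates, flipped endpoints'', and the same holds for $\Psi=\Phi^{-1}$, so $C'$ is a face of $C$ if and only if $\Phi(C')$ is a face of $\Phi(C)$. A dimension-preserving cellular bijection that induces an isomorphism of face posets is an isomorphism of cube complexes, which gives $\config_r(n,N)\cong\config_R(n,N)$. The only thing demanding attention throughout is the observation that an endpoint of some $e_i$ is unoccupied in a given face of $C$ precisely when the opposite endpoint is the one retained; once that is in hand the verification is routine.
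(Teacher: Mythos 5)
Your proof is correct and follows the same robot--ghost exchange idea the paper uses; the paper simply asserts the correspondence informally in the paragraph preceding the lemma, while you make it rigorous by writing down the explicit cellular bijection (same edges, complementary vertex set) and verifying the face-poset isomorphism, including the key endpoint-flip that encodes ghosts moving opposite to robots.
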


The third symmetry is both more complicated to state accurately and a bit more difficult to prove. The idea can best be illustrated by coarsely viewing robot configurations as a non-negative integer solution to a 2 by 2 row-column sum problem.

\begin{definition}[Solutions]\label{def:solutions}
Let $r,R,n,N$ be positive integers with $r+R = n+N$ and let $(a,b,c,d)$ be a $4$-tuple of non-negative integers.  When $a+b=r$, $c+d=R$, $a+c=n$ and $b+d=N$ we say that $(a,b,c,d)$ is a \emph{solution to the $(r,R,n,N)$ row and column sum problem} or, more simply, that $(a,b,c,d)$ is an \emph{$(r,R,n,N)$-solution}. The terminology comes from the fact that the $2 \times 2$ matrix with entries $a$, $b$, $c$ and $d$ has row sums equal to $r$ and $R$ and column sums equal to $n$ and $N$. The common value $T=r+R=n+N=a+b+c+d$ is the total of all four entries. See the array below.
\[\begin{blockarray}{ccc}
\begin{block}{[cc]c}
 a  & b & r \\
 c & d &  R \\
\end{block}
n & N &   T\\
\end{blockarray}\]  
\end{definition}

\begin{remark}[Robot configurations as solutions]
Notice that every configuration of $r$ robots on the vertices of a $K_{n,N}$ yields an $(r,R,n,N)$-solution. Let $a$ be the number of robots on the $n$-side of $K_{n,N}$, let $b$ be the number of robots on the $N$-side, and let $c$ and $d$ be the number of ghosts on each side. Then the $4$-tuple $(a,b,c,d)$ is an $(r,R,n,N)$-solution. (See Figure~\ref{fig:abcd}.) 
\end{remark}

\begin{figure}
\begin{tikzpicture} [scale=0.5, thick]
\foreach \x in {2,3,4} \draw [ultra thick, fill=black] (-2,\x) circle [radius=0.22];
\foreach \x in {2,3,4} \draw [ultra thick, fill=black] (2 -.2,\x -.2) rectangle (2+.2, \x +.2);
\foreach \x in {-1,0,1} \draw [fill=gray, gray] (-2,\x) circle [radius=.05];
\foreach \x in {-1,0,1} \draw [fill=gray, gray] (2,\x) circle [radius=.05];
\foreach \x in {-2,-3,-4} \draw [ultra thick, fill=white] (-2,\x) circle [radius=0.22];
\foreach \x in {-2,-3,-4} \draw [ultra thick, fill=white] (2 -.2,\x -.2) rectangle (2+.2, \x +.2);
\draw [snake=brace, ultra thick] (-3,.75)--++(0,3.5);
\draw [snake=brace, ultra thick] (-3,-4.25)--++(0,4.5);
\draw [snake=brace, mirror snake, ultra thick] (3,-.25)--++(0,4.5);
\draw [snake=brace, mirror snake, ultra thick] (3,-4.25)--++(0,3.5);
\node at (-3.6,2.5) {\Large $a$};
\node at (3.6,2) {\Large $b$};
\node at (-3.6,-2) {\Large $c$};
\node at (3.6,-2.5) {\Large $d$};
\node at (-2,-5.5) {\Large $n$ spots};
\node at (2,-5.5) {\Large $N$ spots};
\end{tikzpicture}
\caption{The robots are positioned at the black spots and the ghosts are at the white spots. The circles and squares denotes  the two parts of the bipartite graph.\label{fig:abcd}}
\end{figure}
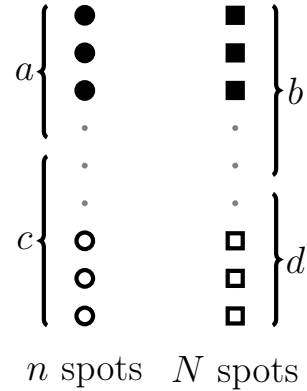

\begin{definition}[Configuration type]\label{def:config-type}
The \emph{type} of a configuration of $r$ robots on the vertices of $K_{n,N}$ is the associated $(r,R,n,N)$-solution. If $(a,b,c,d)$ is the $(r,R,n,N)$-solution of a configuration of $r$ robots on the vertices of $K_{n,N}$ then we say that such a configuration is \textit{type} $(a,b,c,d)$. 
\end{definition} 

\begin{remark}[Symmetries reinterpreted]
The eight symmetries of a square can be applied to a row and column sum problem and to its set of solutions. Notice that the symmetry listed in Lemma~\ref{lem:swap-left-right} corresponds to exchanging the two columns.  In other words, we can identify solutions of 
\[
\begin{blockarray}{ccc}
\begin{block}{[cc]c}
 \cdot  & \cdot & r \\
 \cdot & \cdot &  R \\
\end{block}
n & N &   T\\
\end{blockarray}
\hspace{.5in} \text{ with solutions of } \hspace{.5in}
\begin{blockarray}{ccc}
\begin{block}{[cc]c}
 \cdot  & \cdot & r \\
 \cdot & \cdot &  R \\
\end{block}
N & n &   T\\
\end{blockarray}
\]

Similarly, the symmetry listed in Lemma~\ref{lem:swap-robot-ghost} corresponds to exchanging the two rows.  In other words, we can identify 
solutions of 
\[
\begin{blockarray}{ccc}
\begin{block}{[cc]c}
 \cdot  & \cdot & r \\
 \cdot & \cdot &  R \\
\end{block}
n & N &   T\\
\end{blockarray}
\hspace{.5in} \text{ with solutions of } \hspace{.5in}
\begin{blockarray}{ccc}
\begin{block}{[cc]c}
 \cdot  & \cdot & R \\
 \cdot & \cdot &  r \\
\end{block}
n & N &   T\\
\end{blockarray}
\]
\end{remark}

\begin{remark}[The third symmetry]
The third symmetry that we introduce can be interpreted as transposing solutions; it identifies solutions of
\[
\begin{blockarray}{ccc}
\begin{block}{[cc]c}
 \cdot  & \cdot & r \\
 \cdot & \cdot &  R \\
\end{block}
n & N &   T\\
\end{blockarray}
\hspace{.5in} \text{ with solutions of } \hspace{.5in}
\begin{blockarray}{ccc}
\begin{block}{[cc]c}
 \cdot  & \cdot & n \\
 \cdot & \cdot &  N \\
\end{block}
r & R &   T\\
\end{blockarray}
\]
This symmetry  leads to a local isomorphism between a neighborhood of a $0$-cell in  $\config_r(n,N)$ and a neighborhood of some $0$-cell in  $\config_n(r,R)$.  This is somewhat surprising since these two configuration spaces are not identical.  They do not even have the same number of $0$-cells (see Example~\ref{ex:common-universal-cover}).
\end{remark}

In the remainder of this section we introduce additional features that help us to prove that while not identical, $\config_r(n,N)$ and  $\config_n(r,R)$ have a common, finite cover.  This implies the local isomorphism alluded to above and ultimately allows us to assume $r \leq n \leq N$. We develop a common covering space of $\config_r(n,N)$ and $\config_n(r,R)$ by making both robots and ghosts distinguishable rather than indistinguishable.  To formalize this we introduce a more precise label for each $0$-cell in the configuration space.  The $0$-cells in the version with distinguishable robots and distinguishable ghosts can be placed in bijection with $T \times T$ permutation matrices where $T = r+R = n+N$ is the total number of vertices in the bipartite graph $K_{n,N}$.  We note that the vertices of $K_{n,N}$ are distinguished, one from another, and the usual convention for this distinction comes from vertically ordering  the  vertices on the left and right.  Consider the following example.

\begin{figure}[hptb]
\begin{tikzpicture} [scale=1, thick]

\begin{scope} [xshift=-3.5cm]
\foreach \x in {1,2,3,4} \foreach \y in {0.5,1.5,2.5,3.5,4.5} 
\draw [very thick, gray] (-2,\x)--(2,\y);

\foreach \x in {1,2,3,4} \draw [ultra thick, fill=white] (-2,\x) circle [radius=0.4];
\foreach \x in {0.5,1.5,2.5,3.5,4.5} \draw [ultra thick, fill=white] (2,\x) circle [radius=0.4];

\node at (-2,4) {$p_1$}; \node at (-2.75,4) {$q_1$};
\node at (-2,3) {$P_2$};\node at (-2.75,3) {$q_2$};
\node at (-2,2) {$p_3$};\node at (-2.75,2) {$q_3$};
\node at (-2,1) {$P_4$};\node at (-2.75,1) {$q_4$};
\node at (2,4.5) {$P_6$}; \node at (2.8,4.5) {$Q_1$};
\node at (2,3.5) {$p_2$}; \node at (2.8,3.5) {$Q_2$};
\node at (2,2.5) {$P_3$};\node at (2.8,2.5) {$Q_3$};
\node at (2,1.5) {$P_1$};\node at (2.8,1.5) {$Q_4$};
\node at (2,0.5) {$P_5$};\node at (2.8,0.5) {$Q_5$};
\end{scope}

\end{tikzpicture}
\caption{A fully labelled configuration of three robots and six ghosts moving on a $K_{4,5}$.}
\label{fig:labelEverythingA}
\end{figure}

\begin{example}[A fully labelled configuration]\label{ex:fullyLabelSymmetry}
Consider the fully labelled configuration shown in Figure~\ref{fig:labelEverythingA} and the corresponding permutation matrix shown in Figure~\ref{fig:fourMatrices}.  The labels next to the vertices are the names of the vertices ($q_i$ or $Q_i$).  The label inside the circle is the name of the robot or ghost that occupies this vertex.  If we view $p_1$, $p_2$ and $p_3$ as robot names, this is a configuration of $3$ robots on a $K_{4,5}$. If we remove the subscripts on the $p_j$'s and the $P_j$'s, to make the robots and the ghosts indistinguishable, this becomes a vertex in the configuration space $\config_3(4,5)$.  The fully labelled configuration is encoded in the $9\times 9$ matrix in the upper lefthand corner of Figure~\ref{fig:fourMatrices}.  Once the subscripts on the $p_j$'s and $P_j$'s are removed, this configuration is encoded in the $2\times 9$ matrix in the lower lefthand corner. The top row indicates the locations of the robots and the bottom row indicates the locations of the ghosts.
\end{example}

\begin{figure}
\[
\begin{array}{r@{}c@{}c@{}c}
& q\hspace*{5.8em} Q\hspace*{.7em} & &  \\ [5pt]
  \begin{array}{c} 
 \\ 
 p\\
 \\ 
 \\
 \\
 P\\
 \\
 \\
 \\
 \end{array}
 &
\left[ \begin{array}{cccc;{2pt/2pt}ccccc}
                     1 & 0 & 0 & 0 & 0 & 0 & 0 & 0 & 0  \\
                     0 & 0 & 0 & 0 & 0 & 1 & 0 & 0 & 0  \\
                      0 & 0 & 1 & 0 & 0 & 0 & 0 &0 & 0  \\
                      \hdashline[2pt/2pt]
                      0 & 0 & 0 & 0 & 0 & 0 & 0 &1 & 0  \\
                     0 & 1 & 0 & 0 & 0 & 0 & 0 &0 & 0  \\
                      0 & 0 & 0 & 0 & 0 & 0 & 1 & 0 & 0  \\
                      0 & 0 & 0 & 1 & 0 & 0 & 0 &0 & 0  \\
                     0 & 0 & 0 & 0 & 0 & 0 & 0 &0 & 1  \\
                      0 & 0 & 0 & 0 & 1 & 0 & 0 &0 & 0  \\
                                  \end{array}\right] 
 &\ & 
  \left[\begin{array}{cc}
                       1 & 0 \\ 
                       0 & 1 \\
                       1 & 0 \\ \hdashline[2pt/2pt]
                        0 & 1 \\ 
                       1 & 0 \\
                       0 & 1 \\ 
                       1 & 0 \\ 
                       0 & 1 \\
                       0 & 1 \\
                      \end{array}\right] \\ 
 \ \\
                                  & 
\left[\begin{array}{cccc;{2pt/2pt}ccccc}
         1 & 0 & 1 & 0 & 0 & 1 & 0 & 0 & 0 \\
         0 & 1 & 0 & 1 & 1 & 0 & 1 & 1 & 1 \\
  \end{array}\right]
 & &\left[ \begin{array}{cc}
2 & 1\\ 
2 & 4\\
\end{array}\right]  \\ \\ 
  
\end{array}
\]    

\caption{The four matrices associated with the configuration shown in Figure~\ref{fig:labelEverythingA}.\label{fig:fourMatrices}}
\end{figure}

It is possible to construct a configuration space from these fully labelled configurations. 

\begin{definition}[Fully labelled configuration spaces]
\label{def:robotGhost}
Consider a cell complex whose $0$-cells are indexed by the fully labelled configurations of $r$ distinguished robots and $R$ distinguished ghosts on a $K_{n,N}$ and whose $1$-cells correspond to a named robot switching places with a named ghost along an edge of the graph.  In terms of the associated permutation matrix, this type of movement corresponds to replacing a $2\times 2$ minor (with one entry in each of the four blocks).  If this minor is a $2\times 2$ identity matrix it is replaced with a matrix with ones on the anti-diagonal, and if it has ones on the anti-diagonal, it is replaced with an identity matrix.  Cubes in this complex correspond to a set of robot-ghost exchanges where the associated edges are pairwise disjoint.  We write $\config(r,R,n,N)$ to denote this cell complex.  

Another approach to describing the cubical structure of $\config(r,R,n,N)$ is via partitions.  Let $A$ be the set of vertices on the left of $K_{n,N}$, let $B$ be the set of vertices on the right, let $C$ be the set of robots and let $D$ be the set of ghosts.  A cubical cell of $\config(r,R,n,N)$ can be identified with a partition of the set $A \cup B \cup C \cup D$ into blocks of size $2$ and $4$.  Each block of size $2$ must contain one vertex from $A\cup B$ and one occupant from $C\cup D$ and these represent the robots and ghosts that are stationary throughout this cell.  Each block of size $4$ must contain exactly one element from each of $A$, $B$, $C$ and $D$, and this represents a particular robot and a particular ghost moving along a specific edge in $K_{n,N}$.  The dimension of the cube is equal to the number of blocks of size $4$.

\end{definition}

\begin{proposition}[Covering space]\label{prop:covering-space}
  The natural map from the configuration space $\config(r,R,n,N)$ to the configuration space $\config_r(n,N)$ is a covering map whose index is $r! R!$.
\end{proposition}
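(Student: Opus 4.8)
The plan is to recognise the natural map $\pi\colon\config(r,R,n,N)\to\config_r(n,N)$ as the quotient map for a free cellular action of the finite group $G=S_r\times S_R$, and then to count one fibre.

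\emph{Setting up the map and the action.} First I would describe $\pi$ on cells using the partition model of Definition~\ref{def:robotGhost}: a cube of $\config(r,R,n,N)$ is a partition of $A\cup B\cup C\cup D$ into blocks of the prescribed types, and $\pi$ forgets the names of the robots in $C$ and of the ghosts in $D$, keeping only, for each vertex of $A\cup B$, whether the occupant it is matched with (in a size-$2$ block) or the occupant traversing the incident edge of $K_{n,N}$ (in a size-$4$ block) is a robot or a ghost. I would check that the data retained is exactly that of a cube of $\config_r(n,N)$, so $\pi$ is a cellular, dimension-preserving, surjective map. The group $G=S_r\times S_R$ acts cellularly on $\config(r,R,n,N)$ by relabelling $C$ and $D$, and $\pi$ is constant on $G$-orbits, so it factors through a cellular map $\bar\pi\colon\config(r,R,n,N)/G\to\config_r(n,N)$, which, by a short argument, is a bijection on cells and hence an isomorphism.

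\emph{Freeness of the action.} This is the structural heart of the proof. I would argue that only the identity of $G$ fixes a cube $Q$ setwise: each vertex of $A\cup B$ lies in a unique block of the partition defining $Q$ and is itself untouched by $G$, so every block --- each of which meets $A\cup B$ --- is preserved setwise; then inspecting a preserved size-$2$ block $\{a,p_i\}$ forces $\sigma(i)=i$, and a preserved size-$4$ block $\{a,b,p_i,P_j\}$ forces $\sigma(i)=i$ and $\tau(j)=j$, and since every robot index and every ghost index appears in some block we get $\sigma=\mathrm{id}$, $\tau=\mathrm{id}$. Hence $G$ acts with trivial cell-stabilisers, so freely, on all of $\config(r,R,n,N)$.

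\emph{Conclusion.} A finite group acting freely and cellularly on a CW complex (with trivial stabiliser on every cell) has quotient map a regular covering map, so $\pi$ is a covering map with deck group $G$. Identifying the fibre over a $0$-cell makes the index transparent: a $0$-cell of $\config_r(n,N)$ is a placement of $r$ indistinguishable robots and $R$ indistinguishable ghosts on distinct vertices of $K_{n,N}$, and its preimages are the ways to name them --- $r!$ choices for the robots, $R!$ for the ghosts --- all distinct. Since $\config_r(n,N)$ is connected, every fibre has $r!\,R!$ points, so $\pi$ has index $r!\,R!$.

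\emph{Expected obstacle.} The subtle point is not the bookkeeping but verifying that $\pi$ is an honest local homeomorphism and not merely a cellular surjection, i.e.\ that forgetting labels folds nothing together near a $0$-cell. The freeness argument above handles this, but I would also give the direct local picture: over a $0$-cell $\tilde v$ (a permutation matrix) lying above $v$, a cube through $v$ is a set of pairwise-disjoint ``exchange'' edges of $K_{n,N}$, and there is exactly one cube through $\tilde v$ mapping to it --- the one in which the robot and ghost that swap along each exchange edge are the robot and the ghost sitting at its two ends according to $\tilde v$. This bijection of closed stars is the local isomorphism of cube complexes that promotes $\pi$ to a covering map, and the count in the previous paragraph then gives the index. (That $\config_r(n,N)$ is a cube complex is part of Theorem~\ref{thm:cat-cube-cplx}, and $\config(r,R,n,N)$ is one by construction.)
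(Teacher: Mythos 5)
Your proposal is correct and follows essentially the same route as the paper: the paper's proof also exhibits the $S_r\times S_R$ action on $\config(r,R,n,N)$ in the partition model and checks that a nontrivial relabelling of robots and ghosts changes the partition, hence the action is free on cells. You simply spell out more of the surrounding standard steps (identifying the quotient with $\config_r(n,N)$, the local star bijection, and the fibre count giving $r!\,R!$) that the paper leaves implicit.
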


\begin{proof}
    The group $S_r \times S_R$ acts on $\config(r,R,n,N)$ by permuting the labels on the robots and ghosts.  Let $\sigma$ be a cubical cell in $\config(r,R,n,N)$ with  corresponding partition into blocks of size $2$ and $4$.  If we non-trivially permute the labels on the robots and/or the ghosts without changing the labels on the vertices, then the resulting partition is distinct from the original partition, and the resulting cube is distinct from the original cube.  In particular, the action on the complex is free. 
\end{proof}

\begin{remark}[Distinguishing robots and ghosts]
In the literature, it is common to consider the ($r!$)-fold cover of $\config_r(n,N)$ obtained by distinguishing the robots.  In the analogous situation for the classical braid group, this corresponds to the difference between the braid group and the pure braid group.   What we have described goes one step further by distinguishing the ghosts as well.  If we use a bar to indicate situations where robots and/or ghosts are not distinguishable then the standard configuration space described in \S\ref{sec:configurations} is:
\[
\config_r(n,N) = \config(\underline{r},\underline{R},n,N)
\]
The situation from the literature where robots (but not ghosts) are distinguished is $\config(r,\underline{R},n,N)$.  
\end{remark}

We note that the process of introducing ghosts, and distinguishing ghosts and robots, applies to  graph braid groups on any graph.

\begin{remark}[Transposing fully labelled configurations]
Given a fully labelled configuration of $r$ robots and $R$ ghosts on a $K_{n,N}$, one can switch the roles of the partitioned vertices and the occupying robots and ghosts, to yield a fully labelled configuration of $n$ robots and $N$ ghosts on a $K_{r,R}$.  This corresponds to transposing the associated permutation matrix, or  merely switching how robots/ghosts and left/right vertices are identified with the rows and columns of the matrix.
\end{remark}

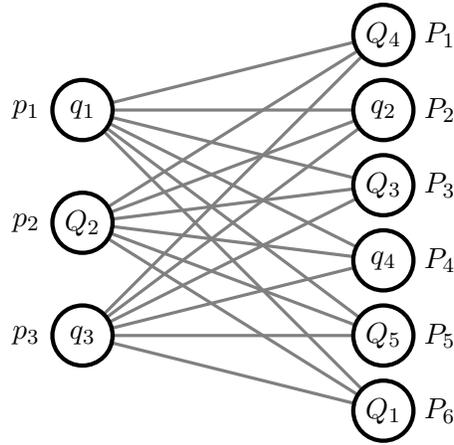
\begin{figure}[hptb]
\begin{tikzpicture} [scale=1, thick]

\begin{scope} [xshift=3cm]
\foreach \x in {1,2.5,4} \foreach \y in {0,1,2,3,4,5} 
\draw [very thick, gray] (-2,\x)--(2,\y);

\foreach \x in {1,2.5,4} \draw [ultra thick, fill=white] (-2,\x) circle [radius=0.4];
\foreach \x in {0,1,2,3,4,5} \draw [ultra thick, fill=white] (2,\x) circle [radius=0.4];

\node at (-2,4) {$q_1$}; \node at (-2.75,4) {$p_1$};
\node at (-2,2.5) {$Q_2$};\node at (-2.75,2.5) {$p_2$};
\node at (-2,1) {$q_3$};\node at (-2.75,1) {$p_3$};
\node at (2,5) {$Q_4$}; \node at (2.75,5) {$P_1$};
\node at (2,4) {$q_2$}; \node at (2.75,4) {$P_2$};
\node at (2,3) {$Q_3$};\node at (2.75,3) {$P_3$};
\node at (2,2) {$q_4$};\node at (2.75,2) {$P_4$};
\node at (2,1) {$Q_5$};\node at (2.75,1) {$P_5$};
\node at (2,0) {$Q_1$};\node at (2.75,0) {$P_6$};
\end{scope}

\end{tikzpicture}
\caption{The fully labelled configuration of four robots and five ghosts moving on a $K_{3,6}$ which is the transpose of the fully labelled configuration shown in Figure~\ref{fig:labelEverythingA}.}
\label{fig:labelEverythingB}
\end{figure}

\begin{example}[Transposing fully labelled configurations]
We return to the situation discussed in Example~\ref{ex:fullyLabelSymmetry}, with three distinguished robots and six distinguished ghosts on a $K_{4,5}$. At the level of the permutation matrix shown in Figure~\ref{fig:fourMatrices}, one simply transposes the $9\times 9$ matrix in the upper lefthand corner to produce a new matrix that describes $4$ distinguished robots and $5$ distinguished ghosts on a $K_{3,6}$ or, simply change how the matrix is being interpreted.  In Example~\ref{ex:fullyLabelSymmetry} we removed the subscripts on the rows to describe a $0$-cell in the configuration space $\config_3(4,5)$.  Here we remove the subscripts on the columns to produce a $9 \times 2$ matrix shown in the upper righthand corner which describes a $0$-cell in the configuration space $\config_4(3,6)$.   Returning to the graphical interpretation in Figure~\ref{fig:labelEverythingA}, the `transposed' version shown in Figure~\ref{fig:labelEverythingB} has the three $p$'s and six $P$'s now interpreted as left and right vertices of a $K_{3,6}$, the four $q$'s are the robots, and the five $Q$'s are the ghosts.  
\end{example}

Moving from one configuration to the other essentially corresponds to switching the roles of two $2$-colorings on $T = r+R = n+N$ vertices.  The first $2$-coloring describes the split between left and right and the second $2$-coloring describes the split between robots and ghosts.  Note that this type of switch is only possible because we are working with a complete bipartite graph.  We are unaware of any similar type of operation for other graphs.

\begin{proposition}[Common universal cover]\label{prop:commonUniversalCover}
Let $r, R, n,$ and $N$ be positive integers where $r+R = n+N$.  The eight configuration spaces: \[\config_r(n,N), \config_r(N,n), \config_R(n,N), \config_R(N,n),\]
\[\config_n(r,R), \config_n(R,r), \config_N(r,R),  \config_N(R,r)\] all share the same universal cover.
\end{proposition}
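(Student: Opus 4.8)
The plan is to collapse the list of eight spaces down to the single pair $\config_r(n,N)$ and $\config_n(r,R)$, and then exhibit the fully labelled configuration space $\config(r,R,n,N)$ as a covering space of \emph{both} of them. The universal cover of that common cover is then the common universal cover of the pair, and hence of all eight.

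First I would dispose of the easy identifications. Repeatedly applying Lemma~\ref{lem:swap-left-right} and Lemma~\ref{lem:swap-robot-ghost}, and using the standing hypothesis $r+R=n+N=T$ (so that $R=T-r$ and $N=T-n$), the four spaces $\config_r(n,N)$, $\config_r(N,n)$, $\config_R(n,N)$, $\config_R(N,n)$ are pairwise isomorphic as cell complexes, and likewise $\config_n(r,R)$, $\config_n(R,r)$, $\config_N(r,R)$, $\config_N(R,r)$ are pairwise isomorphic. Isomorphic complexes are homeomorphic and homeomorphic spaces have homeomorphic universal covers, so it remains only to show that $\config_r(n,N)$ and $\config_n(r,R)$ share a universal cover.

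Next I would produce the transpose isomorphism of cube complexes $\Phi\colon \config(r,R,n,N)\xrightarrow{\;\cong\;}\config(n,N,r,R)$. The cleanest route is the partition description of Definition~\ref{def:robotGhost}: a cube of $\config(r,R,n,N)$ is a partition of $A\sqcup B\sqcup C\sqcup D$ (left vertices, right vertices, robots, ghosts) into size-$2$ blocks, each consisting of one element of $A\sqcup B$ together with one element of $C\sqcup D$, and size-$4$ blocks, each consisting of one element from each of $A,B,C,D$; the face relation splits a size-$4$ block $\{a,b,c,d\}$ into $\{a,c\},\{b,d\}$ or into $\{a,d\},\{b,c\}$. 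This data is manifestly symmetric under interchanging the ``vertex'' pair $(A,B)$ with the ``occupant'' pair $(C,D)$: size-$2$ and size-$4$ blocks go to size-$2$ and size-$4$ blocks, and the two admissible splittings of a size-$4$ block are the very same two partitions no matter which pair is declared to be the vertices. Hence $\Phi$ is a genuine cubical isomorphism, preserving cubes, faces, and gluings. Composing $\Phi$ with the covering map $\config(n,N,r,R)\to\config_n(r,R)$ of Proposition~\ref{prop:covering-space} (applied with parameters $(n,N,r,R)$ in place of $(r,R,n,N)$), while Proposition~\ref{prop:covering-space} itself gives the covering map $\config(r,R,n,N)\to\config_r(n,N)$, exhibits $\config(r,R,n,N)$ as a covering space of both $\config_r(n,N)$ and $\config_n(r,R)$. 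Since $\config_r(n,N)$ and $\config_n(r,R)$ are connected, any connected component $Y$ of $\config(r,R,n,N)$ still covers each of them, so its universal cover $\widetilde Y$ is simply connected and covers both; therefore $\widetilde Y$ is the universal cover of each, and together with the first step all eight spaces have this same universal cover.

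The main obstacle is the middle step: one must check that $\Phi$ is an isomorphism of cube complexes and not merely a bijection on $0$-cells or a homotopy equivalence, i.e.\ that transposing a fully labelled configuration respects the entire face poset, higher cubes included. The partition reformulation reduces this to bookkeeping, and the remaining wrinkle---that $\config(r,R,n,N)$ need not be connected---is handled routinely by passing to a connected component, using the standard fact that a covering of a connected base restricts to a covering on each component of the total space.
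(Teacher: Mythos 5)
Your proposal is correct and follows essentially the same route as the paper: reduce the eight spaces to the two families via the left--right and robot--ghost swaps, then use the fully labelled space $\config(r,R,n,N)$, identified with $\config(n,N,r,R)$ by the transpose symmetry, as a common cover of both families via Proposition~\ref{prop:covering-space}. You simply make explicit two points the paper leaves implicit --- that the transpose respects the full cubical structure (via the partition description) and that a possible failure of connectedness of the cover is harmless --- which is a reasonable amount of added care rather than a different argument.
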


\begin{proof} 
First note that the four configuration spaces
\[
\config_r(n,N) \simeq \config_r(N,n) \simeq \config_R(n,N) \simeq \config_R(N,n)\]
are naturally identifiable via the swapping of left and right vertices and/or the swapping of robots and ghosts.  Similarly, the four configuration spaces
\[
\config_n(r,R) \simeq \config_n(R,r) \simeq \config_N(r,R) \simeq \config_N(R,r)\]
are also naturally identifiable.  And finally, by Proposition~\ref{prop:covering-space}, both families of identified configuration spaces have a common cover.  The cover of the first family is of index $r!R!$ and the cover of the second family is of index $n!N!$.  As a consequence, all eight have the same universal cover.
\end{proof}

\begin{example}[Common universal cover] \label{ex:common-universal-cover}
Consider the eight examples where $\{\{r,R\},\{n,N\}\} = \{\{3,6\},\{4,5\}\}$. The four configuration spaces with $\{r,R\} = \{3,6\}$ and $\{n,N\} = \{4,5\}$ all have exactly $\binom{9}{3} = \binom{9}{6} = 84$ vertices.  Similarly, the four configuration spaces with $\{r,R\} = \{4,5\}$ and $\{n,N\} = \{3,6\}$ have exactly $\binom{9}{4} = \binom{9}{5} = 126$ vertices.  The common cover of both of these spaces, in which everything is fully distinguished, would have $9! = 84 \cdot (3!6!) = 126 \cdot (4!5!) = 362,880$ vertices.
\end{example}

The following corollary allows us to assume that $r \leq n \leq N \leq R$ when computing the degree of connectivity at infinity.

\begin{corollary}[Ordering parameters]\label{cor:order}
Let $r,R,n,N$ be positive integers with $r+R=n+N$.  Among the eight configuration spaces with a common universal cover, there is at least one in which there are at least as many vertices on the right as on the left and all of the robots can fit on the lefthand side.  
\end{corollary}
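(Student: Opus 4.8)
The plan is to read the statement off directly from the three symmetries already in hand. By Lemma~\ref{lem:swap-left-right} we may interchange the two sides of the bipartite graph, by Lemma~\ref{lem:swap-robot-ghost} we may interchange the roles of robots and ghosts, and by the transpose symmetry underlying Proposition~\ref{prop:commonUniversalCover} we may pass between the family whose robot/ghost pair is $\{r,R\}$ and vertex pair is $\{n,N\}$ and the family whose robot/ghost pair is $\{n,N\}$ and vertex pair is $\{r,R\}$. Consequently a configuration space $\config_a(b,c)$ occurs among the eight listed in Proposition~\ref{prop:commonUniversalCover} precisely when $a$ is one of $r,R,n,N$ and $\{b,c\}$ is the complementary unordered pair, taken in either order. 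This reformulation is the only conceptual content; everything else is a choice of representative.

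Next I would set $m=\min\{r,R,n,N\}$ and split into two cases according to which pair realizes the minimum. If $m\in\{r,R\}$, I choose the configuration space among the eight whose robot count is $m$ and whose vertex pair is $\{n,N\}$, ordering the sides so the left side has $\min\{n,N\}$ vertices and the right side has $\max\{n,N\}$ vertices; this is one of $\config_r(n,N),\config_r(N,n),\config_R(n,N),\config_R(N,n)$. If instead $m\in\{n,N\}$, I choose the analogous configuration space whose robot count is $m$ and whose vertex pair is $\{r,R\}$, again ordered so that the larger side is on the right; this is one of $\config_n(r,R),\config_n(R,r),\config_N(r,R),\config_N(R,r)$. (If $m$ is attained in both pairs, either choice works.) In each case the right side has at least as many vertices as the left by construction, and since $m$ is the overall minimum it is at most the size of the left side, so all $m$ robots fit there. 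This is exactly the assertion of the corollary.

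There is no genuine obstacle here; the only point that needs care is the bookkeeping that the configuration space selected in each case really is one of the eight, which is immediate from the description of that list recalled in the first paragraph. For the application in Section~\ref{sec:computations} one may further note that if $r$ now denotes the chosen (minimal) robot count, $n$ the left size, $N$ the right size, and $R=n+N-r$ the ghost count, then $r\le n$ combined with $n\le N$ forces $R=n+N-r\ge N$, so the relabelled parameters automatically satisfy the full chain $r\le n\le N\le R$ used in the subsequent computations.
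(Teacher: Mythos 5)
Your argument is correct and is essentially the paper's intended reasoning: the paper states the corollary without proof as an immediate consequence of Proposition~\ref{prop:commonUniversalCover}, and your case split on where $\min\{r,R,n,N\}$ is attained, together with ordering the complementary pair so the larger value sits on the right, is exactly the selection the paper has in mind. Your closing observation that the relabelled parameters then satisfy the full chain $r\le n\le N\le R$ matches how the corollary is used in Section~\ref{sec:computations}.
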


\section{Vertex Links}\label{sec:vertex-links}

According to Theorem~\ref{thm:link-infinity} in the introduction, the connectivity at infinity of the universal cover of $\config_r(n,N)$ is determined by the minimal connectivity of certain links in $\config_r(n,N)$. In this section, we establish this minimal link connectivity in the special case where the parameters are ordered $r \leq n \leq N \leq R$ which, by Corollary~\ref{cor:order}, is not a serious restriction.  We begin by recalling the definition of a vertex link in a piecewise Euclidean complex built out of unit cubes and then acknowledge that this terminology breaks with our earlier conventions.

\begin{definition}[Vertex links]\label{def:vertex-links}
The link of a $0$-cell $v$ in a piecewise Euclidean cube complex is the metric sphere based at $v$ of  radius $\epsilon <1$.   For $0$-cells in $\config_r(n,N)$, the link admits a  natural simplicial structure where the $0$-simplices in $\Lk(v)$ correspond to $1$-cells of $\config_r(n,N)$ that contain $v$, and more generally, $k$-simplices in $\Lk(v)$ correspond to $(k+1)$-cubes that contain~$v$.  This simplicial complex is what is commonly known as a \emph{vertex link}.
\end{definition}

\begin{remark}[Vertex links]\label{rem:vertex-link}
Definition~\ref{def:vertex-links} is the one exception that we make to the convention described in Remark~\ref{rem:vertex-vs-0-cell}.  We use the standard terminology ``vertex link'' to denote the link of a $0$-cell in a configuration space, even though we continue to refrain from calling these $0$-cells ``vertices''.
\end{remark}

Vertex links in $\config_r(n,N)$ can also be described combinatorially.  

\begin{remark}[Combinatorial description of vertex links]\label{rem:comb-vertex-links}
A $0$-cell $v$ in $\config_r(n,N)$ corresponds to selecting $r$ vertices in a $K_{n,N}$ and designating them as being the vertices occupied by robots.  One can then define a poset whose elements are subsets $\sigma$ of the closed edges of $K_{n,N}$, where the edges in $\sigma$ are disjoint and each edge has one vertex that is occupied and the other is unoccupied. The order relation is given by containment. This poset is the face poset of $\Lk(v)$ and the geometric realization of the poset is the barycentric subdivision of $\Lk(v)$.  Because we are interested primarily in the topology of vertex links, we do not distinguish between the link of a vertex and its barycentric subdivision.
\end{remark}

Note that a $0$-cell of $\config_r(n,N)$ corresponds to a given arrangement of robots on $K_{n,N}$ and the link of this $0$-cell records the possible robot motions that start from this particular arrangement. Vertex links in $\config_r(n,N)$ also have a nice combinatorial representation in terms of \textit{chessboard complexes}.

\begin{definition}[Chessboard complexes]\label{def:chessboard-complexes}
To define a chessboard complex consider placements of rooks on an $m\times n$ chessboard so that no two rooks are in the same row or column. We call such placements \textit{permissible}. The collection of permissible placements can be viewed as a poset by considering one placement to be below another if the set of occupied squares in the first one are also occupied in the second.  The chessboard complex $\Delta_{m,n}$ is the simplicial realization of the poset of all permissible placements, on an $m\times n$ chessboard. 
\end{definition}

In \cite{MZ}, Meier and Zhang showed  that the vertex links in the configuration spaces of $r$ robots on the complete graph $K_{r+N}$ are homeomorphic to $\Delta_{r,N}$.  If a $0$-cell $v$ of $\config_r(n,N)$ corresponds to a configuration in which all robots are on the side of $K_{n,N}$ with $n$ vertices, then the link of this $0$-cell is the same as $\Delta_{r,N}$ as well. To see this, first note that the description of vertex links given in Remark~\ref{rem:comb-vertex-links} shows that the vertices occupied by ghosts on the same side as the robots, and the edges that emanate from these vertices, do not contribute anything to the link of $v$. Intuitively, this is because no robot is able to enter one of these edges or end at one of this vertices from this starting configuration of robots and ghosts.  So the link is the same as the link associated with a configuration of $r$ robots in a $K_{r,N}$ with all the robots on the left.  Similarly, in the complete graph $K_{r+N}$, the robots are unable to use the edges where both endpoints are occupied by robots, so here too would be the same as the link of $v$ in the complete bipartite graph $K_{r,N}$.

In general, when $v$ is a $0$-cell in $\config_r(n,N)$ corresponding to a configuration where there are robots on both sides, the robots on the left and the ghosts on the right determine one chessboard complex, and the robots on the right and the ghosts on the left determine a second chessboard complex.  More specifically, let $a$ be the number of robots on the $n$-side of $K_{n,N}$ and let $b$ be the number of robots on the $N$-side.  Then subsets of the $a$ robots may move to the $(N - b)$ vertices occupied by ghosts on the opposite side, giving a copy of $\Delta_{a, N-b}$.  Similarly subsets of the $b$ vertices may move to the $(n - a)$ vertices occupied by ghosts on their opposite side, yielding a copy of $\Delta_{b,n-a}$.  As these collections of motions are independent of each other, the resulting complex is the join of two chessboard complexes.

\begin{lemma}[Vertex links are joins]\label{lem:linkDesc}
Let $v$ be a $0$-cell in $\config_r(n,N)$ corresponding to a configuration where $a$ robots are on the $n$-side of $K_{n,N}$ and $b$  robots are on the $N$-side. Then the link of $v$ is 
\[ \Lk(v) \simeq \Delta_{a,N-b}\star \Delta_{b,n-a} \]
with the convention that the join of a complex $K$ with an empty set is just~$K$.
\end{lemma}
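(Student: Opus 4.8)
The plan is to show that, as an abstract simplicial complex, $\Lk(v)$ literally \emph{is} the join $\Delta_{a,N-b}\star\Delta_{b,n-a}$ once the vertex sets are identified in the obvious way. Throughout I would work with the combinatorial model of $\Lk(v)$ from Remark~\ref{rem:comb-vertex-links}: a simplex of $\Lk(v)$ is a set $\sigma$ of pairwise disjoint closed edges of $K_{n,N}$, each having one endpoint occupied by a robot and the other endpoint unoccupied; in particular the vertices of $\Lk(v)$ are exactly the individual such edges, and a collection of them spans a simplex precisely when the edges are pairwise disjoint. I also record the combinatorial shape of $v$: the $n$-side of $K_{n,N}$ is partitioned into the $a$ vertices carrying a robot and the $n-a$ carrying a ghost, and the $N$-side into the $b$ robot-vertices and the $N-b$ ghost-vertices (here $a+b=r$, and $0\le a\le n$, $0\le b\le N$ since $v$ exists).

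Next I would sort the ``relevant'' edges. Since every edge of $K_{n,N}$ joins the $n$-side to the $N$-side, every edge contributing a vertex of $\Lk(v)$ is of exactly one of two kinds: a \emph{left edge}, joining one of the $a$ robot-vertices on the $n$-side to one of the $N-b$ ghost-vertices on the $N$-side; or a \emph{right edge}, joining one of the $n-a$ ghost-vertices on the $n$-side to one of the $b$ robot-vertices on the $N$-side. Labelling the $a$ robot-vertices of the $n$-side and the $N-b$ ghost-vertices of the $N$-side, the rule ``left edge $\mapsto$ rook in the corresponding row and column'' is a bijection between left edges and cells of an $a\times(N-b)$ board under which two left edges are disjoint exactly when the two rooks are non-attacking. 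Hence the full subcomplex of $\Lk(v)$ spanned by the left edges is $\Delta_{a,N-b}$, and symmetrically the full subcomplex spanned by the right edges is $\Delta_{b,n-a}$.

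The crux is the following independence observation, which I would state and verify explicitly: a left edge and a right edge always have disjoint vertex sets, because the left edge uses a robot-vertex of the $n$-side and a ghost-vertex of the $N$-side while the right edge uses a ghost-vertex of the $n$-side and a robot-vertex of the $N$-side, and on each side the robot-vertices are disjoint from the ghost-vertices. Consequently a set of relevant edges is pairwise disjoint if and only if its set of left edges is pairwise disjoint and its set of right edges is pairwise disjoint; that is, a set $\sigma$ of relevant edges is a simplex of $\Lk(v)$ iff $\sigma\cap\{\text{left edges}\}$ is a simplex of $\Delta_{a,N-b}$ and $\sigma\cap\{\text{right edges}\}$ is a simplex of $\Delta_{b,n-a}$. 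This is precisely the condition for $\sigma$ to be a simplex of $\Delta_{a,N-b}\star\Delta_{b,n-a}$, so $\Lk(v)\cong\Delta_{a,N-b}\star\Delta_{b,n-a}$. When one of $a$, $b$, $n-a$, $N-b$ vanishes, the corresponding board has no rows or no columns, its chessboard complex has no vertices, and the join collapses onto the other factor, which is exactly the convention in the statement.

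I do not expect a genuine obstacle: all the content is in the independence observation of the previous paragraph, and everything else is bookkeeping. The only points requiring care are attaching the four quantities $a$, $b$, $n-a$, $N-b$ to the correct sides of $K_{n,N}$ when setting up the two rook bijections, and checking that the degenerate cases $a\in\{0,n\}$ or $b\in\{0,N\}$ are truly subsumed by the join convention rather than needing separate treatment.
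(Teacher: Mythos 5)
Your proposal is correct and follows essentially the same route as the paper, which proves this lemma in the informal paragraph immediately preceding its statement: robots on one side paired with ghost-occupied vertices on the other side give the two chessboard factors, and the independence of the two families of motions yields the join. Your write-up simply makes the paper's "independence" claim explicit via the disjointness of left and right edges, which is a welcome but not substantively different elaboration.
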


\begin{example}[Not a manifold]\label{ex:NotMfd}
Consider the case of two robots on a $K_{3,4}$.  If $v$ is a $0$-cell  corresponding to having one robot on each side of the graph, then the link of $v$ is the join of two points with three points, often called a theta-graph.  We note that in this case, as is typical, the link of a vertex is not a sphere and the configuration space is not a manifold.
\end{example}

Determining the connectivity of chessboard complexes proved to be rather difficult and the final result is stated below. See \cite{BjornerEtAl}, \cite{Ziegler}, \cite{Wachs}, \cite{Athan}, \cite{SharWachs}, and the many references cited in these papers for more detail.  

\begin{theorem}[Chessboard connectivity]\label{thm:ConnChess} 
The chessboard complex $\Delta_{m,n}$ is $(\nu_{m,n}-2)$-connected and not $(\nu_{m,n}-1)$-connected, where 
\[
\nu_{m,n}= \min\left\{m,n,\left\lf\frac{m+n+1}{3}\right\rf\right\}\,.
\]
\end{theorem}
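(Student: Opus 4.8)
The statement packages two assertions: the \emph{connectivity bound}, that $\Delta_{m,n}$ is $(\nu_{m,n}-2)$-connected, and the \emph{sharpness}, that it is not $(\nu_{m,n}-1)$-connected. Once the first is in hand, the Hurewicz theorem turns the second into the purely homological statement $\widetilde H_{\nu_{m,n}-1}(\Delta_{m,n};\mathbb Z)\neq 0$. By the symmetry between the two sides of the board we may assume $m\le n$, and it helps to note at the outset that $\dim\Delta_{m,n}=m-1$, so the term $\min\{m,n\}=m$ in $\nu_{m,n}$ is a dimension cap: it is active exactly when $n\ge 2m-1$, and in that regime the claim is that $\Delta_{m,n}$ is connected one step below its dimension.

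For the connectivity bound the plan is induction on $m+n$, with small cases by hand. Single out the last column: its $m$ vertices $v_1,\dots,v_m$ (rooks on the squares $(1,n),\dots,(m,n)$) have contractible stars, and deleting all of them from $\Delta_{m,n}$ leaves precisely $\Delta_{m,n-1}$, so $\Delta_{m,n}=\Delta_{m,n-1}\cup\bigcup_{i=1}^m\operatorname{star}(v_i)$. Here $\Lk(v_i)\cong\Delta_{m-1,n-1}$ (delete row $i$ and the last column), and a $k$-fold intersection of stars, or of stars together with $\Delta_{m,n-1}$, is a smaller chessboard complex $\Delta_{m-k,n-1}$ on the complementary rows. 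One then feeds this cover into a homotopy-colimit / generalized Mayer--Vietoris argument: the connectivity of $\Delta_{m,n}$ is forced by the connectivities of $\Delta_{m,n-1}$ and of the $\Delta_{m-k,n-1}$, all supplied by the inductive hypothesis, together with the degree shifts coming from the intersection pattern. The real work is arithmetic: one must check a short list of inequalities on the floor function so that exactly the bound $\lfloor(m+n+1)/3\rfloor-2$ survives each step. In the range $n\ge 2m-1$ one may instead quote Ziegler's shelling of $\Delta_{m,n}$, which identifies it up to homotopy with a wedge of $(m-1)$-spheres.

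For sharpness one splits along the same dichotomy. When $n\ge 2m-1$, the target degree $\nu_{m,n}-1=m-1$ is top-dimensional, so the connectivity bound already makes $\Delta_{m,n}$ a wedge of $(m-1)$-spheres, and $\widetilde H_{m-1}\neq 0$ amounts to the reduced Euler characteristic $\widetilde\chi(\Delta_{m,n})=\sum_{k\ge 0}(-1)^{k-1}\binom{m}{k}\binom{n}{k}k!$ being nonzero, which is a standard evaluation of the associated rook polynomial (for $n\ge 2m-1$ this alternating sum does not vanish). The delicate case is $m\le n\le 2m-2$, where $\nu_{m,n}=\lfloor(m+n+1)/3\rfloor$ lies strictly below both $m$ and the dimension, so the relevant homology class is invisible to Euler characteristics and can even vanish rationally. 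Here one follows Shareshian and Wachs: build an explicit discrete Morse matching on the face poset of $\Delta_{m,n}$ whose surviving critical cells pin down an element of order $3$ in $\widetilde H_{\nu_{m,n}-1}(\Delta_{m,n};\mathbb Z)$, handling a few small pairs $(m,n)$ by direct computation. I expect this $3$-torsion non-vanishing to be the genuine obstacle; the connectivity induction, though fiddly, is in the end a bounded amount of floor-function bookkeeping once the column decomposition is set up.
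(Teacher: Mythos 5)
The paper does not prove this statement at all: Theorem~\ref{thm:ConnChess} is imported from the literature, with the connectivity bound due to Bj\"orner--Lov\'asz--Vre\'cica--\v{Z}ivaljevi\'c and Ziegler and the sharpness due to Shareshian--Wachs, and the surrounding text simply cites those sources. Your outline correctly reconstructs the architecture of those cited proofs --- the last-column decomposition $\Delta_{m,n}=\Delta_{m,n-1}\cup\bigcup_i\operatorname{star}(v_i)$ with links and intersections being smaller chessboard complexes, the gluing/induction for the bound (and the correct observation that this naive induction breaks when $\nu_{m,n}=m$, where one instead invokes Ziegler's shelling), the Euler-characteristic argument for sharpness in the top-dimensional regime $n\ge 2m-1$, and the $3$-torsion in $\widetilde H_{\nu_{m,n}-1}$ as the obstruction in the regime $m\le n\le 2m-2$. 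In that sense you are on the same footing as the paper: both accounts ultimately rest on the same external results.

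Be aware, though, of how much is being deferred. The non-vanishing of $\widetilde H_{\nu_{m,n}-1}(\Delta_{m,n};\mathbb Z)$ for $m\le n\le 2m-2$ is not ``a few small pairs by direct computation'' plus bookkeeping; it is the main theorem of Shareshian--Wachs, resolving a conjecture that stood for over a decade precisely because the class vanishes rationally (Friedman--Hanlon), and their published argument builds explicit $3$-torsion cycles by splicing small complexes and chasing long exact sequences rather than by exhibiting a Morse matching --- so that step of your sketch is a citation dressed as a construction. Two smaller points: the Hurewicz reduction of ``not $(\nu-1)$-connected'' to nontrivial homology is only automatic in the direction you use it (nonzero $\widetilde H_{\nu-1}$ forces nonzero $\pi_{\nu-1}$), which is fine, but say so; and the claim that the alternating sum $\sum_k(-1)^{k-1}\binom{m}{k}\binom{n}{k}k!$ is nonzero for all $n\ge 2m-1$ fails at $(m,n)=(1,1)$, where $\Delta_{1,1}$ is a point --- harmless for this paper, which assumes all parameters are at least $2$, but it shows the sharpness statement has genuine boundary cases that a self-contained proof would have to exclude.
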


When considering the join of two chessboard complexes, we can at least determine a lower bound on the connectivity.  The following lemma is stated explicitly as Corollary~3.12 in \cite{Jonsson} and follows from Exercise 4.1.17 in \cite{Hatcher}.

\begin{lemma}[Connectivity of joins]\label{lem:ConnJoin}
If $X$ is a $p$-connected simplicial complex and $Y$ is a $q$-connected simplicial complex, then $X \star Y$ is $(p+q+2)$-connected.
\end{lemma}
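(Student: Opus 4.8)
The plan is to identify $X\star Y$, up to homotopy, with the suspension of a smash product and then read off the connectivity from a minimal cell structure. Recall the standard homeomorphism $X\star Y\cong (X\times CY)\cup_{X\times Y}(CX\times Y)$, where $CX$ and $CY$ are cones; collapsing the contractible cone factors exhibits $X\star Y$ as the homotopy pushout of the two projections $X\times Y\to X$ and $X\times Y\to Y$, and the double mapping cylinder of this diagram is (for well-pointed spaces, which finite simplicial complexes with a vertex basepoint are) homotopy equivalent to the reduced suspension $\Sigma(X\wedge Y)$. This is essentially the content of the cited Exercise~4.1.17 in \cite{Hatcher}; I would reproduce this short identification, or simply quote it.

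Granting $X\star Y\simeq\Sigma(X\wedge Y)$, the rest is bookkeeping with cells. Since $X$ is $p$-connected it is homotopy equivalent to a CW complex with a single $0$-cell and no cells in dimensions $1,\dots,p$ (a standard minimal-model fact; when $p=-1$ there is nothing to arrange, and one just notes that every non-basepoint cell has dimension $\ge 0 = p+1$), and likewise $Y$ has such a model with no cells in dimensions $1,\dots,q$. Replacing $X$ and $Y$ by these models (legitimate because joins and smashes preserve homotopy equivalences of CW complexes), the product cell structure on $X\wedge Y=(X\times Y)/(X\vee Y)$ has, apart from the basepoint, only cells $e\times f$ with $e$ a non-basepoint cell of $X$ and $f$ a non-basepoint cell of $Y$, hence of dimension $\dim e+\dim f\ge (p+1)+(q+1)=p+q+2$. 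So $X\wedge Y$ has one $0$-cell and no further cells below dimension $p+q+2$; its $(p+q+1)$-skeleton is a point, so $X\wedge Y$ is $(p+q+1)$-connected. Suspension raises connectivity by one — concretely $\widetilde H_i(\Sigma Z)\cong\widetilde H_{i-1}(Z)$, $\Sigma Z$ is simply connected because $Z$ is connected, and the Hurewicz theorem then upgrades vanishing reduced homology to vanishing homotopy — so $X\star Y\simeq\Sigma(X\wedge Y)$ is $(p+q+2)$-connected.

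The only genuinely substantive inputs are the equivalence $X\star Y\simeq\Sigma(X\wedge Y)$ and the existence of minimal CW models; both are classical, so I expect the main obstacle to be nothing more than citing them accurately. If one wished to avoid the equivalence entirely, the alternative is to work with the homotopy pushout directly: Mayer--Vietoris together with the Künneth theorem give $\widetilde H_n(X\star Y)\cong\bigoplus_{i+j=n-1}\widetilde H_i(X)\otimes\widetilde H_j(Y)\ \oplus\ \bigoplus_{i+j=n-2}\mathrm{Tor}(\widetilde H_i(X),\widetilde H_j(Y))$, which vanishes for $n\le p+q+2$, while van Kampen applied to the same pushout (using that each projection $X\times Y\to X$ and $X\times Y\to Y$ is surjective on $\pi_1$) shows $X\star Y$ is simply connected, after which Hurewicz finishes. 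In that route the fiddly point is tracking the connecting homomorphisms and the reduced/unreduced Künneth bookkeeping, which is precisely what the suspension-of-smash reformulation packages away cleanly.
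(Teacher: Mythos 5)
Your argument is correct, but it is worth noting that the paper does not actually prove this lemma: it is quoted as known, with pointers to Corollary~3.12 of the J\"onsson reference and Exercise~4.1.17 of Hatcher. What you have written is, in effect, the standard solution to that exercise: identify $X\star Y$ with the double mapping cylinder of $X\leftarrow X\times Y\to Y$, hence with $\Sigma(X\wedge Y)$, replace $X$ and $Y$ by minimal CW models with no cells in dimensions $1,\dots,p$ (resp.\ $1,\dots,q$), observe that $X\wedge Y$ then has no non-basepoint cells below dimension $p+q+2$, and let suspension raise the connectivity by one. All of these steps are sound, and your Mayer--Vietoris/K\"unneth alternative is also a legitimate route. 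Two small points to tidy if you write this out in full: (i) the degenerate cases $p=-1$ or $q=-1$ need the slightly different phrasing you hint at, since a disconnected $X$ has no single-$0$-cell model and the ``$(p+q+1)$-skeleton is a point'' claim degenerates when $p+q+1=-1$ (the conclusion, $(-1)$-connectedness, is still trivially true); and (ii) in the alternative route, the basic van Kampen theorem requires the intersection $X\times Y$ to be path-connected, so it only applies verbatim once $p,q\ge 0$ --- which is in fact the only case where simple connectivity of the join is needed. Neither issue affects the validity of your main line of argument, and in the paper's application the complexes involved are nonempty chessboard complexes, so the edge cases do not arise.
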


Theorem~\ref{thm:ConnChess} and Lemma~\ref{lem:ConnJoin} go a long way towards establishing much of our argument, particularly once we show that connectivity results about vertex links quickly lead to connectivity results about punctured vertex links, as is required by Theorem~\ref{thm:link-infinity}. A key fact about chessboard complexes is that they are \textit{vertex decomposible}, and in Section~4 of \cite{MZ} this fact is used to prove that connectivity is preserved when any closed simplex is removed (Theorem~4.7 of \cite{MZ}). 

\begin{theorem}[Punctured chessboard links]\label{thm:punctureChess}
Let $2 \leq m \leq n$ and let $\sigma$ be a closed simplex (possibly empty).  Then $\Delta_{m,n}$ with $\sigma$ removed is an  $(m-1)$-dimensional, vertex decomposable simplicial complex.  Thus $\Delta_{m,n}$ with $\sigma$ removed is $(\nu_{m,n}-2)$-connected, just like $\Delta_{m,n}$.
\end{theorem}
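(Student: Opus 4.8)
The plan is to verify the three assertions in turn: that $\Delta_{m,n}\setminus\sigma$ is $(m-1)$-dimensional, that it is vertex decomposable, and that it is $(\nu_{m,n}-2)$-connected. I read ``$\Delta_{m,n}$ with the closed simplex $\sigma$ removed'' as the subcomplex of faces of $\Delta_{m,n}$ that do not contain $\sigma$ (for $\sigma=\emptyset$ this is $\Delta_{m,n}$ itself).

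First the dimension. Because $K_{m,n}$ is complete bipartite, a maximal placement of non-attacking rooks cannot leave both a free row and a free column, so every maximal placement has $\min\{m,n\}=m$ rooks; hence $\Delta_{m,n}$ is pure of dimension $m-1$ when $m\le n$. It therefore suffices to exhibit one facet $F$ of $\Delta_{m,n}$ with $\sigma\not\subseteq F$, since then $F\in\Delta_{m,n}\setminus\sigma$ and $\dim F=m-1$. If $\sigma$ is a facet, take any other facet (there are at least $n(n-1)\ge2$ of them since $m,n\ge2$). Otherwise $\sigma$ has fewer than $m$ rooks: remove one rook of $\sigma$, place a different rook in its row, and extend the result to a maximal placement; this avoids $\sigma$.

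For vertex decomposability and connectivity we set up an induction using the recursive structure of chessboard complexes. Recall that a complex is vertex decomposable if it is a simplex or has a shedding vertex $v$ --- one for which $\operatorname{del}(v)$ and $\Lk(v)$ are vertex decomposable and no facet of $\Lk(v)$ is a facet of $\operatorname{del}(v)$. Two facts drive the recursion. First, for a vertex $v=(i,j)$ of $\Delta_{m,n}$ we have $\Lk_{\Delta_{m,n}}(v)\cong\Delta_{m-1,n-1}$ (erase row $i$ and column $j$), while $\operatorname{del}_{\Delta_{m,n}}(v)$ is the complex of rook placements avoiding the square $(i,j)$. Second, since $\operatorname{Aut}(\Delta_{m,n})\supseteq S_m\times S_n$ acts transitively on the $mn$ vertices, the fact recalled before the theorem --- that $\Delta_{m,n}$ is vertex decomposable --- forces \emph{every} vertex to be a shedding vertex. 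Now run an induction (on $m+n$, then on the number of forbidden squares, then on $\dim\sigma$) over the class of complexes of the form ``non-attacking rook placements on a subset of the squares of an $m'\times n'$ board, punctured by removing a closed simplex'', a class closed under taking the link, or the deletion, at a vertex outside the puncture. Given a member of positive dimension with puncture $\sigma$, pick a shedding vertex $v\notin\sigma$ (possible since $\sigma$ involves at most $m$ of the $mn$ vertices and $mn>m$); both $\Lk(v)$ and $\operatorname{del}(v)$ of the punctured complex are members with strictly smaller parameters, hence vertex decomposable by induction, and the shedding condition for $v$ follows from the dimension count above together with $v$ being a shedding vertex of the ambient chessboard complex. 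For connectivity, write the punctured complex as $\operatorname{del}(v)\cup\overline{\operatorname{st}}(v)$ glued along $\Lk(v)$, with $\overline{\operatorname{st}}(v)=v\star\Lk(v)$ contractible; Mayer--Vietoris and van Kampen then bound its connectivity below by $\min\{\operatorname{conn}\operatorname{del}(v),\,\operatorname{conn}\Lk(v)+1\}$. The link contributes $\operatorname{conn}\Lk(v)+1\ge\nu_{m-1,n-1}-1\ge\nu_{m,n}-2$, using the inductive hypothesis and the elementary inequality $\nu_{m-1,n-1}\ge\nu_{m,n}-1$. The base cases are complexes of dimension $\le0$ (handled directly) and the unpunctured $\Delta_{m,n}$ (Theorem~\ref{thm:ConnChess}).

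The step I expect to be the real obstacle is the deletion branch. There $\operatorname{del}_{\Delta_{m,n}}(v)$ is a chessboard complex with one square forbidden, and one needs it to be $(\nu_{m,n}-2)$-connected --- as connected as $\Delta_{m,n}$ itself --- whereas the naive gluing estimate only delivers $(\nu_{m,n}-3)$-connectivity, since $\Lk_{\Delta_{m,n}}(v)=\Delta_{m-1,n-1}$ can be exactly $(\nu_{m,n}-3)$-connected and no more. Keeping this estimate sharp --- by using the precise homotopy type of $\Delta_{m-1,n-1}$ from Theorem~\ref{thm:ConnChess} together with the vertex-decomposable shelling to show that the inclusion $\operatorname{del}_{\Delta_{m,n}}(v)\hookrightarrow\Delta_{m,n}$ is sufficiently highly connected --- is exactly the work carried out in \S4 of \cite{MZ} (Theorem~4.7 there), which we invoke. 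Relative to that result, the present statement contributes only the dimension count and the remark that vertex decomposability is preserved throughout; both emerge from the induction above.
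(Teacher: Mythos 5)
The paper gives no independent proof of this statement---it is imported wholesale from Theorem~4.7 of \cite{MZ}, which is exactly the citation your argument ultimately rests on for the one genuinely hard step (the sharpness of the connectivity of the deletion branch), so your route is essentially the paper's. The extra scaffolding you supply (the purity and dimension count, the inductive framework) is sound in outline, with one caveat: ``$\Delta_{m,n}$ with the closed simplex $\sigma$ removed'' should be read as the subcomplex of faces \emph{disjoint} from $\sigma$ (the homotopy type of the point-set complement of a closed simplex), not the faces merely not containing $\sigma$, so your facet-exhibiting step needs the slightly stronger Hall-type argument producing a maximal rook placement sharing no square with $\sigma$ (still immediate for $2 \leq m \leq n$).
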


The same argument applies in the case of robots on complete bipartite graphs, because the links of vertices are either chessboard complexes or the joins of chessboard complexes, and vertex decomposibility is preserved by the join operation (Theorem~3.30 in \cite{Jonsson}).

\begin{corollary}[Punctured joins]\label{cor:puncturedJoin}
Let $\sigma$ be a closed simplex in the join $\J=\Delta_{m,n}\star \Delta_{p,q}$, including the possibility that $\sigma$ is the empty simplex.  Then $\J$ with $\sigma$ removed is $(\nu_{m,n} + \nu_{p,q}-2)$-connected.
\end{corollary}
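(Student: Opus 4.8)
The target is that $\J\setminus\sigma$ is $c$-connected, where I write $c:=\nu_{m,n}+\nu_{p,q}-2$. The plan is to combine the connectivity of (punctured) chessboard complexes from Theorems~\ref{thm:ConnChess} and~\ref{thm:punctureChess} with the behaviour of connectivity under the join operation (Lemma~\ref{lem:ConnJoin}), and then to glue along a single Mayer--Vietoris decomposition. An alternative, which I would keep in mind as a cross-check, is that $\J$ is vertex decomposable --- each chessboard factor is, by the analysis in Section~4 of \cite{MZ}, and joins of vertex decomposable complexes are vertex decomposable by Theorem~3.30 in \cite{Jonsson} --- so the shedding-vertex induction of \cite{MZ} that proves Theorem~\ref{thm:punctureChess} goes through verbatim for $\J$ and needs nothing about chessboard complexes beyond their established vertex decomposability and connectivity.

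First I would record the unpunctured case and the basic numerology. By Theorem~\ref{thm:ConnChess} the factors $\Delta_{m,n}$ and $\Delta_{p,q}$ are $(\nu_{m,n}-2)$- and $(\nu_{p,q}-2)$-connected, so Lemma~\ref{lem:ConnJoin} makes $\J=\Delta_{m,n}\star\Delta_{p,q}$ exactly $c$-connected; this disposes of the case $\sigma=\emptyset$. Since $\nu_{m,n},\nu_{p,q}\ge 1$ we have $c\ge 0$, and writing the factors in the order $m\le n$, $p\le q$ gives $\dim\J=(m-1)+(p-1)+1=m+p-1>c$, so the claimed connectivity is not vacuous and all the auxiliary complexes below will be nonempty.

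Now take $\sigma$ nonempty and split it along the join, $\sigma=\sigma_1\star\sigma_2$ with $\sigma_i$ a face of the $i$-th factor. If $\sigma$ meets only one factor, say $\sigma=\sigma_1$ with $\sigma_1\neq\emptyset$, then $\J\setminus\sigma=(\Delta_{m,n}\setminus\sigma_1)\star\Delta_{p,q}$, and Theorem~\ref{thm:punctureChess} together with Lemma~\ref{lem:ConnJoin} again yield $c$-connectivity. Otherwise both $\sigma_1$ and $\sigma_2$ are nonempty, and inspecting which faces of $\J$ fail to contain $\sigma$ gives
\[
\J\setminus\sigma=A\cup B,\qquad A=(\Delta_{m,n}\setminus\sigma_1)\star\Delta_{p,q},\quad B=\Delta_{m,n}\star(\Delta_{p,q}\setminus\sigma_2),
\]
with $A\cap B=(\Delta_{m,n}\setminus\sigma_1)\star(\Delta_{p,q}\setminus\sigma_2)$. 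By Theorems~\ref{thm:ConnChess} and~\ref{thm:punctureChess} together with Lemma~\ref{lem:ConnJoin}, each of $A$, $B$, and $A\cap B$ is $c$-connected, and in particular $A\cap B\neq\emptyset$. The van Kampen theorem then gives $\pi_1(A\cup B)=1$ when $c\ge 1$, and the relevant segment $\widetilde{H}_k(A)\oplus\widetilde{H}_k(B)\to\widetilde{H}_k(A\cup B)\to\widetilde{H}_{k-1}(A\cap B)$ of the reduced Mayer--Vietoris sequence has vanishing outer terms for all $k\le c$, so $\widetilde{H}_k(\J\setminus\sigma)=0$ in that range; the Hurewicz theorem upgrades this to $c$-connectivity.

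The only genuine work is the bookkeeping: verifying the displayed identity $\J\setminus\sigma=A\cup B$ (a routine face-by-face check using disjointness of the vertex sets of the two factors), and absorbing the degenerate cases --- an empty chessboard factor, which collapses the join to a single factor, or a one-row factor $\Delta_{1,k}$, not literally covered by Theorem~\ref{thm:punctureChess} but trivially handled since removing a vertex from a set of points preserves $(-1)$-connectivity. I do not expect a real obstacle here; the homotopy-theoretic input is entirely standard once the decomposition is in place, and if one prefers to avoid the case split, the vertex-decomposability route of \cite{MZ} (now applicable to $\J$ by Theorem~3.30 in \cite{Jonsson}) proves the same statement uniformly.
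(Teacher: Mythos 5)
Your strategy and your handling of the cases $\sigma=\emptyset$ and $\sigma$ contained in a single factor match the paper, but the central identity in your main case is false, and it is precisely the step you defer as ``routine bookkeeping.'' When the closed simplex $\sigma=\sigma_1\star\sigma_2$ is removed from $\J$, the faces of $\J$ that survive (equivalently, the subcomplex onto which the point-set complement deformation retracts) are those $\tau=\tau_1\star\tau_2$ sharing no vertex with $\sigma$, i.e.\ those with $\tau_1\cap\sigma_1=\emptyset$ \emph{and} $\tau_2\cap\sigma_2=\emptyset$. Hence
\[
\J\setminus\sigma=\left(\Delta_{m,n}\setminus\sigma_1\right)\star\left(\Delta_{p,q}\setminus\sigma_2\right)=A\cap B,
\]
not $A\cup B$. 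Your union is strictly too large: it contains, for instance, the face $\sigma_1$ itself (take $\tau_1=\sigma_1$ and $\tau_2=\emptyset$, which lies in your $B$), even though every point of $\sigma_1$ lies in the closed simplex being removed. The union decomposition would be correct if ``removing $\sigma$'' meant deleting only the faces that \emph{contain} $\sigma$, but that is not the operation appearing in Theorem~\ref{thm:link-infinity} or Theorem~\ref{thm:punctureChess}. So your Mayer--Vietoris argument computes the connectivity of the wrong space. The silver lining is that the complex you correctly describe as $A\cap B$ is the entire answer: once the displayed identity is in hand, the corollary follows in one line from Theorem~\ref{thm:punctureChess} applied to each factor and Lemma~\ref{lem:ConnJoin} applied to their join, with no gluing or case split needed. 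That is exactly the paper's proof.

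Two smaller points. Lemma~\ref{lem:ConnJoin} gives only a lower bound, so your parenthetical claim that $\J$ is ``exactly'' $(\nu_{m,n}+\nu_{p,q}-2)$-connected is unjustified (the paper is careful about this in Remark~\ref{rem:ExactBounds}); it is harmless here since the corollary asserts only a lower bound. Also, your dismissal of the degenerate factor $\Delta_{1,k}$ fails when $k=1$: removing the unique vertex of $\Delta_{1,1}$ leaves the empty complex, which is not $(-1)$-connected, so this boundary case cannot simply be waved away (the paper's proof inherits the same issue from the hypothesis $2\le m\le n$ in Theorem~\ref{thm:punctureChess}, so it is not a defect unique to your write-up, but it deserves an explicit sentence rather than the claim that it is trivial).
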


\begin{proof}
Let $\sigma_0 = \sigma \cap \Delta_{m,n}$ and $\sigma_1 = \sigma \cap \Delta_{p,q}$.  Then 
\[
\J \setminus \sigma
= \left(\Delta_{m,n} \setminus \sigma_0\right)
\star \left(\Delta_{p,q} \setminus \sigma_1\right)\,.
\]
By Theorem~\ref{thm:punctureChess}, $\J \setminus \sigma$ is the join of a $(\nu_{m,n} - 2)$-connected complex and a $(\nu_{p,q}-2)$-connected complex.  The formula follows by Lemma~\ref{lem:ConnJoin}.
\end{proof}

\begin{table}[hptb]
\caption{We compute lower bounds on the connectivity of the links (and punctured links) of vertices in $\config_3(4,5)$ using Theorem \ref{thm:ConnChess}, Lemmas \ref{lem:linkDesc} and \ref{lem:ConnJoin}, and Corollary~\ref{cor:puncturedJoin}.}
\label{table:k45}
\setlength{\tabcolsep}{1.5mm} 
\def\arraystretch{1.25} 
\centering
\begin{tabular}{|c|c|c|}
  \hline
  \begin{tabular}{@{}c@{}}Robot configuration\\ corresponding to a \\vertex $v$ in $\config_3(4,5)$ \end{tabular}  &   $\Lk(v)$    &   Connectivity of $\Lk(v)$
  \\ \hline
  \begin{tikzpicture}[scale=0.7,baseline={(2,1.5)}]
\foreach \x in {0,4/3,8/3,4} \foreach \y in {0,1,2,3,4} 
\draw [gray] (-2,\x)--(2,\y);

\foreach \x in {0,4/3,8/3,4} \draw [ultra thick, fill=white] (-2,\x) circle [radius=0.2];
\foreach \x in {0,1,2,3,4} \draw [ultra thick, fill=white] (2,\x) circle [radius=0.2];

\draw [ultra thick, fill=black] (-2,0) circle [radius=0.2];
\draw [ultra thick, fill=black] (-2,4/3) circle [radius=0.2];
\draw [ultra thick, fill=black] (-2,8/3) circle [radius=0.2];
\addvmargin{1mm}\end{tikzpicture}   &    $\Delta_{3,5}\star\Delta_{0,1}=\Delta_{3,5}$   & $\min\{3,5,\lf\frac{9}{3}\rf\}-2=1$
  \\ \hline
  \begin{tikzpicture}[scale=0.7, baseline={(2,1.5)}]
\foreach \x in {0,4/3,8/3,4} \foreach \y in {0,1,2,3,4} 
\draw [gray] (-2,\x)--(2,\y);

\foreach \x in {0,4/3,8/3,4} \draw [ultra thick, fill=white] (-2,\x) circle [radius=0.2];
\foreach \x in {0,1,2,3,4} \draw [ultra thick, fill=white] (2,\x) circle [radius=0.2];

\draw [ultra thick, fill=black] (2,2) circle [radius=0.2];
\draw [ultra thick, fill=black] (-2,4/3) circle [radius=0.2];
\draw [ultra thick, fill=black] (-2,8/3) circle [radius=0.2];
\addvmargin{1mm}\end{tikzpicture}   &    $\Delta_{2,4}\star\Delta_{1,2}$   & \begin{tabular}{@{}c@{}}$(\min\{2,4,\lf\frac{7}{3}\rf\}-2)+$ \\ $(\min\{1,2,\lf\frac{4}{3}\rf\}-2)+2=1$\end{tabular}
  \\ \hline
  \begin{tikzpicture}[scale=0.7,baseline={(2,1.5)}]
\foreach \x in {0,4/3,8/3,4} \foreach \y in {0,1,2,3,4} 
\draw [gray] (-2,\x)--(2,\y);

\foreach \x in {0,4/3,8/3,4} \draw [ultra thick, fill=white] (-2,\x) circle [radius=0.2];
\foreach \x in {0,1,2,3,4} \draw [ultra thick, fill=white] (2,\x) circle [radius=0.2];

\draw [ultra thick, fill=black] (2,2) circle [radius=0.2];
\draw [ultra thick, fill=black] (-2,4/3) circle [radius=0.2];
\draw [ultra thick, fill=black] (2,4) circle [radius=0.2];
\addvmargin{1mm}\end{tikzpicture}   & $\Delta_{1,3}\star\Delta_{2,3}$  & \begin{tabular}{@{}c@{}}$(\min\{1,3,\lf\frac{5}{3}\rf\}-2)+$ \\ $(\min\{2,3,\lf\frac{6}{3}\rf\}-2)+2=1$\end{tabular}
  \\ \hline
    \begin{tikzpicture}[scale=0.7,baseline={(2,1.5)}]
\foreach \x in {0,4/3,8/3,4} \foreach \y in {0,1,2,3,4} 
\draw [gray] (-2,\x)--(2,\y);

\foreach \x in {0,4/3,8/3,4} \draw [ultra thick, fill=white] (-2,\x) circle [radius=0.2];
\foreach \x in {0,1,2,3,4} \draw [ultra thick, fill=white] (2,\x) circle [radius=0.2];

\draw [ultra thick, fill=black] (2,2) circle [radius=0.2];
\draw [ultra thick, fill=black] (2,1) circle [radius=0.2];
\draw [ultra thick, fill=black] (2,4) circle [radius=0.2];
\addvmargin{1mm}\end{tikzpicture}   & $\Delta_{0,2}\star\Delta_{3,4}=\Delta_{3,4}$  & $\min\{3,4,\lf\frac{8}{3}\rf\}-2=0$
  \\ \hline
  \end{tabular}
\end{table}

\begin{remark}[Exact bounds]\label{rem:ExactBounds}
We apply these results to $\config_3(4,5)$ and determine the connectivity of vertex links in Table \ref{table:k45}.  The reader should note, however, that at the moment, we have only proved that these values are lower bounds on the connectivities of these joins.  Some of these joins might conceivably be more highly connected.  We address the issue of exactness of the minimum value in the proof of Theorem~\ref{thm:ExactBounds}. In general, the lower bound given in Lemma~\ref{lem:ConnJoin} is usually sufficient for determining the minimal non-connectivity of links but there is one case where we need to analyze a non-trivial join (Lemma~\ref{lem:exception}).
\end{remark}

\section{Computations}\label{sec:computations}

Building from the results in the previous section, we are now able to complete the computation of the minimum connectivity of all the vertex links and the minimum non-connectivity of some vertex link, minimized over the $0$-cells of the configuration space $\config_r(n,N)$. The goal of this section is to prove the following result.

\begin{theorem}[Link connectivity]\label{thm:ExactBounds}
Let $2 \leq r \leq n \leq N\le R$ be integers such that $r+R=n+N$ and let 
\[\ell = \min\left\{r, \thirdfloor{r+n+1}, \thirdfloor{n+N} \right\}.\]
Then the link of every $0$-cell in $\config_r(n,N)$ is at least $(\ell - 2)$-connected. Moreover, there exists a $0$-cell whose link is not $(\ell -1)$-connected.  
\end{theorem}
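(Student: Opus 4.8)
The plan is to analyze the function that assigns to each $0$-cell $v$ of type $(a,b,c,d)$ the connectivity lower bound for its link coming from Lemma~\ref{lem:linkDesc} and Corollary~\ref{cor:puncturedJoin}. By Lemma~\ref{lem:linkDesc}, if $v$ has type $(a,b,c,d)$ — so $a$ robots on the $n$-side, $b=r-a$ robots on the $N$-side, $c=n-a$ ghosts on the $n$-side, $d=N-b$ ghosts on the $N$-side — then $\Lk(v) \simeq \Delta_{a,N-b}\star\Delta_{b,n-a} = \Delta_{a,d}\star\Delta_{b,c}$. By Theorem~\ref{thm:ConnChess} and Lemma~\ref{lem:ConnJoin} this join is $(\nu_{a,d}+\nu_{b,c}-2)$-connected. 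So the first step is to set $f(a,b,c,d) = \nu_{a,d}+\nu_{b,c}$ and prove the lower bound $\min_v f \geq \ell$, where the minimum ranges over all $(r,R,n,N)$-solutions $(a,b,c,d)$ with $0\le a\le r$ (and the other entries determined). This is a finite optimization over a one-parameter family indexed by $a$. Writing $\nu_{a,d}=\min\{a,d,\lfloor(a+d+1)/3\rfloor\}$ and similarly for $\nu_{b,c}$, and using $a+b=r$, $c+d=N+ (n-a) - (n-a)=\ldots$ — more precisely $a+d = a+N-b = N-r+2a$ is awkward, so better to track $a+b=r$, $a+c=n$, $b+d=N$, $c+d=R$ — one checks that $(a+d)+(b+c) = r+R - $ wait, $a+d+b+c = T$, and each of $\nu_{a,d},\nu_{b,c}$ is bounded below by a concave-ish expression in $a$. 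The key is a case analysis on which of the three terms achieves each $\nu$, showing in every case $f(a,b,c,d)\ge \min\{r,\lfloor(r+n+1)/3\rfloor,\lfloor(n+N)/3\rfloor\}=\ell$; the punctured-link requirement is then immediate from Corollary~\ref{cor:puncturedJoin}, which gives the same bound with any simplex removed.

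The second step is exactness: exhibit a specific $0$-cell whose link is \emph{not} $(\ell-1)$-connected. The natural candidate is the all-robots-on-the-left configuration, type $(r,0,n-r,N)$, whose link by Lemma~\ref{lem:linkDesc} is $\Delta_{r,N}\star\Delta_{0,n-r}=\Delta_{r,N}$ (using the empty-join convention). By Theorem~\ref{thm:ConnChess}, $\Delta_{r,N}$ is exactly $(\nu_{r,N}-2)$-connected with $H_{\nu_{r,N}-1}\ne 0$, where $\nu_{r,N}=\min\{r,N,\lfloor(r+N+1)/3\rfloor\}$. Since $r\le n\le N$, we have $\nu_{r,N}=\min\{r,\lfloor(r+N+1)/3\rfloor\}$. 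The remaining arithmetic task is to verify $\nu_{r,N}=\ell$ whenever this configuration is the minimizer — and, when it is not, to find the configuration that is. I would compare $\nu_{r,N}$ against $\lfloor(r+n+1)/3\rfloor$ and $\lfloor(n+N)/3\rfloor$; since $n\le N$ we get $\lfloor(r+n+1)/3\rfloor \le \lfloor(r+N+1)/3\rfloor$, so $\ell$ might be strictly smaller than $\nu_{r,N}$. In that regime one instead uses a configuration of type $(a,b,c,d)$ achieving $\min f = \ell$ for which the join $\Delta_{a,d}\star\Delta_{b,c}$ has its connectivity bound from Lemma~\ref{lem:ConnJoin} be \emph{sharp} (i.e. $H_{\ell-1}\ne 0$); by the homology Künneth-type behavior of joins, $H_{p+q+1}(X\star Y)\cong \widetilde H_p(X)\otimes\widetilde H_q(Y)$ (plus a Tor term), so sharpness of both factors gives sharpness of the join, except possibly when a Tor term vanishes a class.

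The main obstacle I anticipate is exactly that last point — the one the authors flag in Remark~\ref{rem:ExactBounds} as needing Lemma~\ref{lem:exception}: when neither factor alone forces the non-$(\ell-1)$-connectivity in the join, one must analyze the join more carefully rather than citing Lemma~\ref{lem:ConnJoin}. This happens when the minimizing configuration has both chessboard factors nontrivial and the $\lfloor(r+n+1)/3\rfloor$ term is the active one; then $\widetilde H_{\ell-1}(\Delta_{a,d}\star\Delta_{b,c})$ could conceivably vanish even though each factor is no more connected than its generic bound, because the relevant homology degrees in the two factors need not add up correctly. Resolving this will require either an explicit cycle in the join that is shown to be non-bounding (e.g. building it from the known non-bounding cycles in $\Delta_{a,d}$ and $\Delta_{b,c}$ and checking it survives), or identifying a better-behaved minimizing configuration (shifting one robot across the graph to change $(a,b,c,d)$) for which one factor becomes trivial and Theorem~\ref{thm:ConnChess} applies directly. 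The bulk of the write-up will be the careful bookkeeping of the three-way minima in $\nu$ and the verification that the collection of minimizing $(a,b,c,d)$ always contains one to which either Theorem~\ref{thm:ConnChess} or the join analysis of Lemma~\ref{lem:exception} can be applied to get a nonzero $H_{\ell-1}$.
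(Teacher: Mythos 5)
Your lower-bound half is the paper's argument: minimize $\nu_{a,d}+\nu_{b,c}$ over all $(r,R,n,N)$-solutions by a case analysis on which of the three terms realizes each $\nu$ (the paper organizes this as nine sums, grouped by the number of floor functions, and uses an ``adding floors'' identity to handle the two-floor case). That plan is sound, and the punctured-link statement does follow from Corollary~\ref{cor:puncturedJoin} as you say. The gaps are in the sharpness half. First, you chose the wrong one-sided configuration. Putting all robots on the $n$-side gives $\Delta_{r,N}$ with $\nu_{r,N}=\min\{r,\lfloor(r+N+1)/3\rfloor\}$, which in general strictly exceeds $\ell$; the configuration you want is $a=0$, all robots on the $N$-side, whose link is $\Delta_{0,N-r}\star\Delta_{r,n}=\Delta_{r,n}$ with $\nu_{r,n}=\min\{r,\lfloor(r+n+1)/3\rfloor\}$. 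That single configuration witnesses non-$(\ell-1)$-connectivity via Theorem~\ref{thm:ConnChess} whenever $\ell$ equals $r$ or $\lfloor(r+n+1)/3\rfloor$, with no join analysis needed. Second, you misidentify which term forces the genuinely joined case: it is the two-floor term $\lfloor(n+N)/3\rfloor$ being the \emph{strict} minimum, not $\lfloor(r+n+1)/3\rfloor$ (which, as just noted, is always realized by a one-sided configuration).

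For that exceptional case your proposal stops at naming two possible strategies without executing either, and this is where the real work lies. The paper first shows (Lemma~\ref{lem:exception}) that $\lfloor(n+N)/3\rfloor$ can be the strict minimum only when $r=n=N=R$ and $r\equiv 1\bmod 3$: the inequality $\lfloor(n+N)/3\rfloor<\lfloor(r+n+1)/3\rfloor$ forces $N\le r$ and hence all four parameters equal, after which a residue check mod $3$ isolates $r\equiv 1$. It then picks the type with $b=c=2$ (legal since $r\ge 2$, and satisfying $b+c\equiv 1\bmod 3$ so the two-floor sum actually attains $\lfloor 2r/3\rfloor$); the link is $\Delta_{r-2,r-2}\star\Delta_{2,2}$, and since $\Delta_{2,2}$ is two points this is the \emph{suspension} of $\Delta_{r-2,r-2}$, whose connectivity is exactly one more than that of $\Delta_{r-2,r-2}$ and hence exactly $\ell-2$. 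This cleanly avoids the K\"unneth/Tor issues you raise (your route would additionally need that the bottom nonvanishing reduced homology of a chessboard complex is genuinely nonzero in degree $\nu-1$ and that the relevant tensor product does not vanish). Without the reduction of Lemma~\ref{lem:exception} and an explicit choice of type making one factor an $S^0$, your sketch does not yet yield the ``not $(\ell-1)$-connected'' conclusion.
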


We first determine the minimum connectivity of the vertex links only among those $0$-cells of a particular type $(a,b,c,d)$ (Definition~\ref{def:config-type}).  Recall that this means $a+b=r$,  $a+c=n$, $b+d=N$ and $c+d=R$. And note that for fixed values of $r$, $n$, $N$ and $R$, these equations allow one to solve for the other three once one of $a$, $b$, $c$ or $d$ is known. Since the link of a $0$-cell of type $(a,b,c,d)$ is isomorphic to the join of chessboard complexes $\Delta_{a,d} \star \Delta_{b,c}$, the connectivity of the link of a $0$-cell of type $(a,b,c,d)$ is at least $(\nu_{a,d} - 2) + (\nu_{b,c} - 2) + 2$, which is equal to

\[ 
\min\left\{a,d, \thirdfloor{a+d+1} \right\} + 
\min\left\{b,c, \thirdfloor{b+c+1} \right\}-2.
\]

So, in order to find the minimum connectivity, we need to compute
\begin{equation}\label{eq:min-min}
\min_{(a,b,c,d)}\left\{
\min\left\{a, d, \thirdfloor{a + d +1} \right\} + 
\min\left\{b, c, \thirdfloor{b + c +1} \right\} 
\right\}.
\end{equation}
where the outside minimum is taken over all possible $(a,b,c,d)$ types which are $(r,R,n,N)$ solutions.

We analyze this minimum of a sum of minimums in Equation~\ref{eq:min-min} by interchanging the order in which the minimums are taken.  We first
look at the values that can occur for all nine possible summations arising from the formula above (a choice of $a$, $d$ or $\thirdfloor{a+d+1}$ in the first summand followed by a choice of $b$, $c$ or $\thirdfloor{b+c+1}$ in the second summand), and then determine the minimum of each one over all possible $(a,b,c,d)$.

\begin{figure}[hbpt]
\begin{tabular}{c|ccc}
& $b$ & $c$ & $\thirdfloor{b+c+1}$\\ [5pt]
\hline 
$a$ & \textcolor{red}{$a+b$} & \textcolor{red}{$a+c$} & \textcolor{blue}{$a+\thirdfloor{b+c+1}$} \\ [5pt]
$d$ & \textcolor{red}{$d+b$} & \textcolor{red}{$d+c$} & \textcolor{blue}{$d+\thirdfloor{b+c+1}$}\\ [5pt]
$\thirdfloor{a+d+1}$ & \textcolor{blue}{$\thirdfloor{a+d+1}+b$}
& \textcolor{blue}{$\thirdfloor{a+d+1}+c$} 
& $\thirdfloor{a+d+1} + \thirdfloor{b+c+1}$
\end{tabular}
\caption{The nine possible summations.\label{fig:nine-possibilites}\label{fig:color-coded-table}}
\end{figure}

\begin{remark}[Nine cases]
The nine possible sums are displayed in Figure~\ref{fig:color-coded-table}.  Since $a$ is the number of robots on the side of $K_{n,N}$ with $n$ vertices, $r$ is the total number of robots, and we are assuming $r\leq n\leq N\leq R$, there are $r+1$ possibilities for $a$, and hence for $(a,b,c,d)$. In particular there are $r+1$ different sums that need to be checked for each entry in Figure~\ref{fig:color-coded-table}.  
\end{remark}

The nine possibilities are grouped according to the number of floor functions they contain and we analyze each group one at a time.  We proceed by first finding the minimum value of the $4(r+1)$ values corresponding to the entries of Figure~\ref{fig:color-coded-table} with no floor functions. 

\begin{claim}[No floor minimum]\label{claim:no-floors}
The minimum of the (red) entries with no floor function,  over all possible values of $(a,b,c,d)$, is $r$.
\end{claim}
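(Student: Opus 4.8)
The plan is to observe that the four ``red'' sums are not actually free quantities: by the very definition of an $(r,R,n,N)$-solution (Definition~\ref{def:solutions}), every admissible $4$-tuple $(a,b,c,d)$ satisfies $a+b=r$, $a+c=n$, $d+b=N$, and $d+c=R$. Hence the entries in the four red cells of Figure~\ref{fig:color-coded-table} are, in order, $a+b=r$, $a+c=n$, $d+b=N$, and $d+c=R$, and these values do not depend on which solution was chosen. So the collection of $4(r+1)$ numbers being minimized is simply $r+1$ copies of the list $r,n,N,R$.

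With that identification in hand, the claim is immediate from the standing ordering hypothesis $r \le n \le N \le R$: the minimum of $\{r,n,N,R\}$ is $r$, so the minimum over all $(r,R,n,N)$-solutions of the minimum over the four red cells is again $r$. I would also point out that this value is genuinely attained --- by the cell $a+b$, for any valid $(a,b,c,d)$ whatsoever --- so there is no issue of the infimum failing to be realized; for concreteness one may take the solution $(a,b,c,d)=(r,0,n-r,R-r)$ corresponding to all robots sitting on the smaller side, which is admissible precisely because $r\le n$ and $r\le R$ force the last two coordinates to be non-negative.

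The hard part will be nonexistent: this claim is pure bookkeeping, recording that the ``no-floor'' contributions to the link-connectivity estimate can never improve on the trivial bound coming from the total number $r$ of robots. The only things to be careful about are (i) matching the four cells of Figure~\ref{fig:color-coded-table} to the correct row and column sums, and (ii) keeping in mind that we have already reduced to the regime $r\le n\le N\le R$ via Corollary~\ref{cor:order}, so that $r$ really is the smallest of the four sums.
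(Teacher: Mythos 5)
Your proposal is correct and follows essentially the same route as the paper: both observe that the four no-floor entries equal the constants $a+b=r$, $a+c=n$, $b+d=N$, $c+d=R$ for every $(r,R,n,N)$-solution, and then invoke the standing ordering $r \leq n \leq N \leq R$ to conclude the minimum is $r$. Your added remark that the minimum is attained by an explicit solution such as $(r,0,n-r,R-r)$ is a harmless elaboration of the same argument.
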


\begin{proof}
Since $(a,b,c,d)$ is an $(r,n,N,R)$-solution, the entries in the above table that do not involve the floor function are constant across all possible values of $(a,b,c,d)$. Regardless of the configuration, $a+b = r$, $a+c=n$, $b+d=N$ and $c+d=R$. Since we are assuming $r \leq n \leq N \leq R$, the minimum of these four possible values is $r$.
\end{proof}

Next we find the minimum value of the $4(r+1)$ values corresponding to the entries with a single floor function. 

\begin{claim}[One floor minimum]\label{claim:1floor}
The minimum of the (blue) entries with exactly one floor function, over all possible values of $(a,b,c,d)$, is $\thirdfloor{r+n+1}$.
\end{claim}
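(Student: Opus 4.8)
The plan is to analyze the four families of "blue" entries from Figure~\ref{fig:color-coded-table} separately, find the minimum of each family over all $(r,R,n,N)$-solutions $(a,b,c,d)$, and then take the minimum of those four minima. The four blue entries are $a + \thirdfloor{b+c+1}$, $d + \thirdfloor{b+c+1}$, $\thirdfloor{a+d+1} + b$, and $\thirdfloor{a+d+1} + c$. Since every $(a,b,c,d)$ is determined by a single parameter (say $a$, ranging over $0 \le a \le r$), each of these is a function of one integer variable, and I would simply optimize each one.

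First I would rewrite each entry using the solution relations to make the dependence on $a$ explicit: $b = r-a$, $c = n-a$, $d = N-r+a$ (using $r+R=n+N$). Then $b+c+1 = r+n+1-2a$ and $a+d+1 = N-r+2a+1$. So, for instance, the first entry becomes $a + \thirdfloor{(r+n+1) - 2a}{3}$ — wait, I need to be careful with the macro; it is $a + \thirdfloor{r+n+1-2a}$. As $a$ increases by $1$, the floor term drops by roughly $2/3$ while the linear term rises by $1$, so this entry is (weakly) increasing in $a$ and is minimized at $a=0$, giving $\thirdfloor{r+n+1}$. The second entry $d + \thirdfloor{b+c+1} = (N-r+a) + \thirdfloor{r+n+1-2a}$ behaves the same way in $a$ but starts higher (since $N \ge r$ forces $N-r \ge 0$, so at $a=0$ it is $N-r+\thirdfloor{r+n+1} \ge \thirdfloor{r+n+1}$); one should check it cannot dip below $\thirdfloor{r+n+1}$ as $a$ grows, which follows because the net change per step is nonnegative (the linear part gains $1$, the floor loses at most $1$). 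For the third and fourth entries, $\thirdfloor{a+d+1} = \thirdfloor{N-r+2a+1}$ is increasing in $a$ while $b=r-a$ and $c=n-a$ are decreasing; here the tradeoff is $+2/3$ against $-1$ per step, so these entries are (weakly) decreasing in $a$ and minimized at $a=r$, where $b=0$ giving $\thirdfloor{N+r+1}$, and $c=n-r$ giving $(n-r)+\thirdfloor{N+r+1}$. Since $n \ge r$ and the total sum is large, both of these are at least $\thirdfloor{r+n+1}$ — comparing $\thirdfloor{N+r+1}$ with $\thirdfloor{n+r+1}$ uses $N \ge n$, so the third family's minimum $\ge \thirdfloor{r+n+1}$ as well.

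Assembling these, the minimum over all blue entries and all $(a,b,c,d)$ is $\thirdfloor{r+n+1}$, achieved by the type with $a=0$ (all robots on the $N$-side), via the entry $a+\thirdfloor{b+c+1}$. The main obstacle I anticipate is the bookkeeping in the "increasing/decreasing in $a$" monotonicity arguments: because of the floor function, "increases by $1$, floor decreases by at most $1$" needs a clean lemma (e.g., $\thirdfloor{m-2} \ge \thirdfloor{m} - 1$ for all integers $m$, equivalently $\lfloor m/3 \rfloor - \lfloor (m-2)/3\rfloor \in \{0,1\}$), and one must confirm the comparisons $\thirdfloor{N+r+1} \ge \thirdfloor{n+r+1}$ and $(n-r)+\thirdfloor{N+r+1} \ge \thirdfloor{n+r+1}$ and $N-r+\thirdfloor{r+n+1} \ge \thirdfloor{r+n+1}$ rigorously from the ordering $r \le n \le N \le R$. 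These are all elementary but must be stated carefully rather than waved through; once the one-step monotonicity lemma for $\thirdfloor{\cdot}$ is in hand, each of the four families is dispatched in a line.
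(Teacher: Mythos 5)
Your argument is correct, but it takes a genuinely different route from the paper's. The paper's proof rests on the single identity $k+\thirdfloor{x}=\thirdfloor{3k+x}$, which absorbs the integer summand into the floor and collapses each blue entry into one floor expression: $a+\thirdfloor{b+c+1}=\thirdfloor{r+n+a+1}$, and similarly $\thirdfloor{R+N+d+1}$, $\thirdfloor{r+N+b+1}$, $\thirdfloor{R+n+c+1}$ for the other three. Minimizing is then immediate by monotonicity of the floor: $r+n$ is the smallest of the four pair-sums under $r\leq n\leq N\leq R$, the added variable is nonnegative, and $a=0$ is attainable, giving $\thirdfloor{r+n+1}$. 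You instead parametrize every solution by $a$ and run a discrete monotonicity analysis on each of the four entries as functions of $a$, using the one-step estimate $\thirdfloor{m}-\thirdfloor{m-2}\in\{0,1\}$ to show two entries are weakly increasing (minimized at $a=0$) and two weakly decreasing (minimized at $a=r$), then compare the four endpoint values. Both arguments are complete and elementary; the paper's buys brevity and sidesteps the step-size bookkeeping you correctly flag as the delicate point, while yours has the minor virtue of exhibiting explicitly how each entry varies across configurations and of making visible that the extremes occur exactly at the all-robots-on-one-side configurations $a=0$ and $a=r$ (a fact the paper later uses when arguing the bound is sharp). One small note: your reductions $b=r-a$, $c=n-a$, $d=N-r+a$ and the range $0\leq a\leq r$ implicitly use $r\leq n\leq N$ to guarantee $c,d\geq 0$; that hypothesis is in force here, but it is worth stating where it enters.
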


\begin{proof}
Each of the four one floor summations simplify to a single floor function involving $r$, $R$, $n$, or $N$ with the all summations inside the floor function. For example,
\begin{align*}
\thirdfloor{b + c + 1} + a &= \thirdfloor{3a + b + c + 1}\\
&= \thirdfloor{(a + b) + (a + c) + a + 1}\\
&= \thirdfloor{r + n + a + 1}.
\end{align*}
Similarly, the other three one floor summations  simplify to 
\[
\thirdfloor{R + N + d + 1},
\thirdfloor{R + n + c + 1}, \text{ and }
\thirdfloor{r + N + b + 1}.
\]
By our assumption that $r \leq n \leq N \leq R$ we know that $r + n$ is minimal among $r + n$, $r + N$, $R + n$, and  $R + N$. Moreover, the minimum of these four terms will occur when $a=0$.  So among these four terms the minimum will be
\[
\thirdfloor{r + n + a + 1} = 
\thirdfloor{r + n + 0 + 1} = 
\thirdfloor{r + n+1 }.
\]
\end{proof}

Finally, we determine the minimum of the $r+1$ values arising from  the entry with two floor functions. 

\begin{claim}[Two floor minimum]\label{claim:2floors}
The minimum of the  (black) entries of the form $\thirdfloor{a+d+1} + \thirdfloor{b+c+1}$, over all possible values of $(a,b,c,d)$, is $\thirdfloor{n+N}$.
\end{claim}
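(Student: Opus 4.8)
The plan is to turn this minimum over all $(r,R,n,N)$-solutions into a one-variable minimization. Since $r \le n \le N \le R$, a solution $(a,b,c,d)$ is completely determined by $a$, the number of robots on the $n$-side, via $b = r-a$, $c = n-a$, $d = N-r+a$, and the non-negativity requirements reduce to $a \in \{0,1,\dots,r\}$ (the inequalities $c \ge 0$ and $d \ge 0$ come for free from $r \le n$ and $r \le N$). A short computation then gives $a+d = 2a+(N-r)$ and $b+c = (n+r)-2a$, with $(a+d)+(b+c) = n+N = T$, so the quantity to be minimized is the function
\[
F(a) \;=\; \thirdfloor{2a+N-r+1} + \thirdfloor{n+r-2a+1},
\]
and the claim asserts that $\min_{0 \le a \le r} F(a) = \thirdfloor{T}$.

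The main tool is the elementary residue identity: for integers $m$ and $p$, one has $\thirdfloor{m} + \thirdfloor{p} = \thirdfloor{m+p}$ unless $(m \bmod 3) + (p \bmod 3) \ge 3$, in which case it equals $\thirdfloor{m+p} - 1$. Applying this with $m = 2a+N-r+1$ and $p = n+r-2a+1$, and observing that $m+p = T+2$ is independent of $a$, we obtain $F(a) \in \{\thirdfloor{T+2},\ \thirdfloor{T+2}-1\}$ for every $a$. Since $\thirdfloor{T+2} = \thirdfloor{T}$ when $3 \mid T$ and $\thirdfloor{T+2} = \thirdfloor{T}+1$ otherwise, everything comes down to controlling when the ``$-1$'' case occurs.

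When $3 \mid T$ we have $m+p \equiv 2 \pmod 3$, which forces $(m \bmod 3) + (p \bmod 3) = 2$ (the only value in $\{0,\dots,4\}$ congruent to $2$), so the ``$-1$'' case never occurs and $F(a) = T/3 = \thirdfloor{T}$ for all $a$; both halves of the claim follow at once. When $3 \nmid T$ we get $F(a) \ge \thirdfloor{T+2}-1 = \thirdfloor{T}$, which is the lower bound, and it remains to produce one $a$ for which $(m \bmod 3) + (p \bmod 3) \ge 3$. If $T \equiv 1 \pmod 3$ this means arranging $m \equiv 1$ and $p \equiv 2 \pmod 3$; if $T \equiv 2 \pmod 3$ it means arranging $m \equiv p \equiv 2 \pmod 3$. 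In either case it suffices to pick $a$ so that $2a+N-r$ lands in a prescribed residue class mod $3$ (namely $0$, respectively $1$), the residue of $p$ being then determined by $m+p = T+2$.

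The one genuinely delicate point --- and the place where the hypothesis $r \ge 2$ is actually used --- is the assertion that $2a+N-r$ can be made to lie in any prescribed residue class mod $3$ with $a \in \{0,1,\dots,r\}$. This holds because $r \ge 2$ lets us take $a \in \{0,1,2\}$, and then $2\cdot 0 + N-r$, $2 \cdot 1 + N-r$, $2 \cdot 2 + N-r$ is an arithmetic progression with common difference $2$, hence meets all three residue classes modulo $3$. With this in hand the required $a$ exists in every case, and only the routine residue bookkeeping sketched above remains.
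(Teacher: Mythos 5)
Your proof is correct and follows essentially the same route as the paper: both arguments exploit the fact that the two floor arguments sum to the constant $n+N+2$, reduce the value to $\thirdfloor{n+N}$ plus a correction depending only on a residue class mod $3$, and then realize the minimizing residue by noting that varying $a$ over $\{0,1,2\}$ (available since $r\ge 2$) shifts the relevant quantity by $2$ each step, hence covers all residues mod $3$. The only difference is bookkeeping: the paper tracks $b+c \bmod 3$ via its Lemma on adding floors, while you track the ``carry'' in $(m \bmod 3)+(p \bmod 3)$.
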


The statement and proof of this result is more delicate than the other two. In fact, Claim~\ref{claim:2floors} is only true because of our standing assumption that $\min\{r,R,n,N\}\geq 2$. Before giving the proof of this claim, we first show how to rewrite the sum of two floor functions as a single floor function.

\begin{lemma}[Adding floors]\label{lem:add-floors}
Let $p$ and $q$ be integers. If $i \equiv q \mod 3$ is the remainder of $q$ mod $3$ with $i \in \{0,1,2\}$, then
\[
\thirdfloor{p} + \thirdfloor{q} 
= \thirdfloor{p + q - i}.
\]
\end{lemma}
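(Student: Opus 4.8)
The plan is to prove Lemma~\ref{lem:add-floors} by a direct computation, reducing both sides to a common form and checking the residues modulo $3$. First I would write $q = 3s + i$ where $s = \thirdfloor{q}$ and $i \in \{0,1,2\}$ is the residue, so that $\thirdfloor{q} = s$ and $q - i = 3s$. Then
\[
\thirdfloor{p+q-i} = \thirdfloor{p + 3s} = \thirdfloor{p} + s = \thirdfloor{p} + \thirdfloor{q},
\]
where the middle equality uses the standard fact that $\thirdfloor{x + k} = \thirdfloor{x} + k$ for any integer $k$ (here $k = s$ and $x = p$ is itself an integer, so this is just $\lf (p + 3s)/3 \rf = \lf p/3 \rf + s$, which follows because $3s/3 = s$ is an integer). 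This is the whole argument; the identity is essentially bookkeeping.

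I would actually streamline it further by never invoking $\thirdfloor{x+k}=\thirdfloor{x}+k$ as a named fact and instead just observing that $p + q - i = p + 3s$, so dividing by $3$ gives $(p+q-i)/3 = p/3 + s$, and taking floors of both sides moves the integer $s$ outside the floor: $\thirdfloor{p+q-i} = \lf p/3 \rf + s = \thirdfloor{p} + \thirdfloor{q}$. The key observation making this work is simply that subtracting the residue $i$ from $q$ leaves a multiple of $3$, and integer multiples of $3$ pass freely in and out of $\thirdfloor{\cdot}$.

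There is no real obstacle here — the statement is an elementary identity about floor functions, included as a lemma only because it will be applied repeatedly in the proof of Claim~\ref{claim:2floors} to collapse the expression $\thirdfloor{a+d+1} + \thirdfloor{b+c+1}$ into a single floor. The only thing to be careful about is the asymmetry of the statement: we subtract the residue of $q$ (not of $p$), which is fine precisely because $p$ is assumed to be an integer, so $\thirdfloor{p}$ contributes no fractional part and all the "rounding" happens in the $q$ term. If both $p$ and $q$ were allowed to be non-integers the clean identity would fail, so I would make sure the hypothesis "$p$ and $q$ be integers" is used visibly, even though in the intended application $p = a+d+1$ and $q = b+c+1$ are automatically integers.
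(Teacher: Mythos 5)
Your proof is correct and uses essentially the same idea as the paper: write $q = 3s + i$ with $s = \thirdfloor{q}$ and observe that the multiple of $3$ passes freely through $\thirdfloor{\cdot}$, so $\thirdfloor{p+q-i} = \thirdfloor{p+3s} = \thirdfloor{p} + s$. The paper phrases this as shifting $3k$ between the two floor terms, but the computation is the same.
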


\begin{proof}
For any integer $k$, 
\[
\thirdfloor{p} + \thirdfloor{q} = \left(\thirdfloor{p}+k\right) + \left(\thirdfloor{q}-k\right) = \thirdfloor{p+3k} + \thirdfloor{q-3k}. 
\]
In particular, when $k$ is chosen so that $q = 3k + i$ with $i \in \{0, 1,2\}$, 
\[ 
\thirdfloor{p} + \thirdfloor{q} = \thirdfloor{p+q-i} + \thirdfloor{i} = \thirdfloor{p+q-i}.
\]
\end{proof}

Since the single floor function of Lemma~\ref{lem:add-floors} depends on the value of $q \mod 3$, this lemma enables us to compute the value of the sum listed in Claim~\ref{claim:2floors}, but at the cost of splitting the answer into three cases.

\begin{corollary}[Three cases]\label{cor:threecases}
If $(a,b,c,d)$ is an $(r,n,N,R)$-solution, then 
\[
\thirdfloor{a+d+1} + \thirdfloor{b+c+1} =  
\begin{cases} 
\thirdfloor{n+N+2} &\text{ if } b+c \equiv 2 \mod 3\\ 
\thirdfloor{n+N+1} &\text{ if } b+c \equiv 0 \mod 3\\ 
\thirdfloor{n+N} &\text{ if } b+c \equiv 1 \mod 3 
\end{cases}
\]
In particular, if there is a configuration with $b + c \equiv 1 \mod 3$, then the minimum of the sum of the two floor functions over all configurations is $\thirdfloor{n+N}$.  
\end{corollary}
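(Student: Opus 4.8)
The plan is to deduce the three-case formula from Lemma~\ref{lem:add-floors} by a single substitution, using the one structural fact about $(r,n,N,R)$-solutions that makes everything collapse: $a+b+c+d = T = n+N$. First I would set $p = a+d+1$ and $q = b+c+1$. Since $(a,b,c,d)$ is an $(r,n,N,R)$-solution we have $a+b+c+d = n+N$, so
\[ p + q = (a+d+1) + (b+c+1) = (a+b+c+d) + 2 = n+N+2. \]
Applying Lemma~\ref{lem:add-floors}, with $i\in\{0,1,2\}$ denoting the residue of $q = b+c+1$ modulo $3$, gives $\thirdfloor{a+d+1} + \thirdfloor{b+c+1} = \thirdfloor{p+q-i} = \thirdfloor{n+N+2-i}$.

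It then remains to translate the residue of $q$ into the residue of $b+c$: if $b+c\equiv 2$ then $q\equiv 0$ and $i=0$; if $b+c\equiv 0$ then $q\equiv 1$ and $i=1$; and if $b+c\equiv 1$ then $q\equiv 2$ and $i=2$. Substituting these three values of $i$ into $\thirdfloor{n+N+2-i}$ yields exactly the three listed cases. For the ``in particular'' clause I would simply invoke monotonicity of the floor function, $\thirdfloor{n+N}\le\thirdfloor{n+N+1}\le\thirdfloor{n+N+2}$: among the three possible values of the sum, the smallest is $\thirdfloor{n+N}$, and it is attained precisely on those configurations with $b+c\equiv 1\pmod 3$; hence if at least one such configuration exists, the minimum over all configurations equals $\thirdfloor{n+N}$. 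Note that none of this uses the ordering $r\le n\le N\le R$; only the identity $a+b+c+d=n+N$ is needed.

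I do not expect any genuine obstacle in the corollary itself---it is a mechanical consequence of Lemma~\ref{lem:add-floors} together with that identity. The real difficulty is postponed to Claim~\ref{claim:2floors}: one must exhibit an actual $(r,n,N,R)$-solution with $b+c\equiv 1\pmod 3$ (and verify the remaining residue classes cannot produce a smaller value), and it is there that the standing hypothesis $\min\{r,R,n,N\}\ge 2$ becomes essential. Accordingly, in the write-up I would keep the proof of this corollary to a few lines and flag that the existence question is treated separately.
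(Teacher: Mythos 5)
Your proof is correct and follows essentially the same route as the paper's: both apply Lemma~\ref{lem:add-floors} with $i$ the residue of $b+c+1$ modulo $3$, use $a+b+c+d=n+N$ to get $\thirdfloor{n+N+2-i}$, and conclude the ``in particular'' clause by comparing the three values. Your added remarks---that the ordering $r\le n\le N\le R$ is not needed here and that the existence of a configuration with $b+c\equiv 1 \pmod 3$ is deferred to Claim~\ref{claim:2floors}---are accurate and match how the paper organizes the argument.
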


\begin{proof}
Let $i$ be the remainder of $b+c+1$ mod $3$, with $i \in \{0,1,2\}$.  By Lemma~\ref{lem:add-floors}, $\thirdfloor{a+d+1} + \thirdfloor{b+c+1} = \thirdfloor{n+N+2-i}$, which simplifies as shown for the three possible values of $i$. Since $\thirdfloor{n+N}$ is the smallest of the three values it is the minimum if it occurs. 
\end{proof}

From this Corollary it is easy to complete the proof of Claim~\ref{claim:2floors}.

\begin{proof}[Proof of Claim~\ref{claim:2floors}]
Moving a robot from one side of a $K_{n,N}$ to the other changes the sum $b+c$ by $2$.  Since $r, n\geq 2$ by our standing assumption, there are at least three configurations with the associated sum $b+c$ yielding three different remainders modulo $3$.  Thus  Corollary~\ref{cor:threecases} implies Claim~\ref{claim:2floors}.
\end{proof}

Together, Claims~\ref{claim:no-floors}, \ref{claim:1floor} and~\ref{claim:2floors} establish the following lower bound.

\begin{proposition}[Lower Bound]\label{prop:lower-bound}
Let $2 \leq r \leq n \leq N\le R$ be integers such that $r+R=n+N$ and let 
\[\ell = \min\left\{r, \thirdfloor{r+n+1}, \thirdfloor{n+N} \right\}.\]
The link of every vertex in $\config_r(n,N)$ is at least $(\ell - 2)$-connected.
\end{proposition}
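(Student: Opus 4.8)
The plan is simply to assemble the ingredients already established. Fix a $0$-cell $v$ in $\config_r(n,N)$ and let $(a,b,c,d)$ be its type, so that $a+b=r$, $a+c=n$, $b+d=N$, and $c+d=R$. By Lemma~\ref{lem:linkDesc} we have $\Lk(v)\simeq\Delta_{a,d}\star\Delta_{b,c}$, and applying Corollary~\ref{cor:puncturedJoin} with $\sigma=\emptyset$ (equivalently, Theorem~\ref{thm:ConnChess} together with Lemma~\ref{lem:ConnJoin}) shows that $\Lk(v)$ is at least $(\nu_{a,d}+\nu_{b,c}-2)$-connected. So it suffices to prove that $\nu_{a,d}+\nu_{b,c}\ge\ell$ for every type $(a,b,c,d)$ that occurs, i.e.\ for every $(r,R,n,N)$-solution; equivalently, that the quantity in Equation~\ref{eq:min-min} is at least $\ell$.

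First I would rewrite the inner sum of two minimums as a single minimum: since $\min\{x_1,x_2,x_3\}+\min\{y_1,y_2,y_3\}=\min_{i,j}\{x_i+y_j\}$, the inner expression in Equation~\ref{eq:min-min} is the minimum of the nine sums displayed in Figure~\ref{fig:color-coded-table}. Interchanging the order of the two minimums, the whole double minimum equals $\min_{i,j}\bigl(\min_{(a,b,c,d)}(x_i+y_j)\bigr)$, so it is enough to minimize each of the nine table entries over all $(r,R,n,N)$-solutions and then take the smallest of these nine numbers.

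Next I would invoke the three claims, which carry out exactly this computation, grouped by the number of floor functions in the entry. The four floor-free (red) entries are constant in $(a,b,c,d)$ with values $r$, $n$, $N$, $R$, so their minimum is $r$ by the ordering hypothesis (Claim~\ref{claim:no-floors}). Each single-floor (blue) entry rewrites as a single floor function, and minimizing over the solutions gives $\thirdfloor{r+n+1}$ (Claim~\ref{claim:1floor}). The unique two-floor (black) entry has minimum $\thirdfloor{n+N}$ (Claim~\ref{claim:2floors}). Taking the minimum of these three numbers gives exactly $\ell$, so $\nu_{a,d}+\nu_{b,c}\ge\ell$ for every type, and hence the link of every $0$-cell of $\config_r(n,N)$ is at least $(\ell-2)$-connected.

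I expect the only genuinely delicate step — already isolated above — to be Claim~\ref{claim:2floors}. By Lemma~\ref{lem:add-floors} and Corollary~\ref{cor:threecases} the value of $\thirdfloor{a+d+1}+\thirdfloor{b+c+1}$ depends on $b+c \bmod 3$, taking its smallest value $\thirdfloor{n+N}$ precisely when $b+c\equiv 1\pmod 3$; so one must know that $b+c$ realizes all three residue classes as $(a,b,c,d)$ ranges over the solutions. Moving a robot across $K_{n,N}$ changes $b+c$ by $2$, and the standing hypothesis $\min\{r,n\}\ge 2$ provides at least three distinct robot distributions, which is exactly what forces all three residues to occur. Without that hypothesis the minimum in Claim~\ref{claim:2floors}, and hence the bound $\ell$, could be strictly larger, so this is the one place where the $\geq 2$ assumption is essential.
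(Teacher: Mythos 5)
Your proposal is correct and follows essentially the same route as the paper: the paper likewise deduces the proposition by combining Lemma~\ref{lem:linkDesc} with Corollary~\ref{cor:puncturedJoin}, interchanging the order of the minimums in Equation~\ref{eq:min-min} to reduce to the nine entries of Figure~\ref{fig:color-coded-table}, and then citing Claims~\ref{claim:no-floors}, \ref{claim:1floor}, and~\ref{claim:2floors}. You also correctly single out Claim~\ref{claim:2floors} and the role of the hypothesis $\min\{r,n\}\geq 2$ as the one delicate point, which matches the paper's own emphasis.
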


This connectivity bound is actually sharp as we now show. In particular, by Theorem~\ref{thm:ConnChess} we know the exact connectivity when the vertex link itself is a single chessboard complex, i.e. when it corresponds to a configuration with all of the robots on one side.  
It turns out that such configurations almost always yield the minimum connectivity.  In the proof of the one floor minimum, Claim~\ref{claim:1floor}, the minimum is realized when $a=0$, which means that all of the robots are on one side of the graph, and we know the exact connectivity of this vertex link by Theorem~\ref{thm:ConnChess}.   
In the proof of the no floor minimum, Claim~\ref{claim:no-floors}, the minimum of $r$ is realized regardless of the configuration, so we can elect to use a configuration where all of the robots are on one side, so that the vertex link is a single chessboard complex and we know its exact connectivity. 

The third possible value $\thirdfloor{n+N}$, however, might only be produced in situations where there are robots on both sides, making the vertex link a join of two chessboard complexes, and in this case all we have is a lower bound on the connectivity (Remark~\ref{rem:ExactBounds}).  Luckily, the types of situations where the third entry is strictly necessary are very rare and their exact connectivity can be handled explicitly.

\begin{lemma}[Exceptional case]\label{lem:exception}
If the minimum value for the connectivity of the link of a $0$-cell in $\config_r(n,N)$ is determined by a non-trivial join of two chessboard complexes then $2\leq r=n=N=R$ and $r\equiv 1\mod{3}$.
\end{lemma}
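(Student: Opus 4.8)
The statement pinpoints exactly when the minimum link connectivity is governed by the third term $\thirdfloor{n+N}$ \emph{and} is witnessed only by a vertex whose link is a genuine join of two non-empty chessboard complexes. The strategy is to work contrapositively with the three-way minimum $\ell = \min\{r,\thirdfloor{r+n+1},\thirdfloor{n+N}\}$ from Proposition~\ref{prop:lower-bound}: I will show that unless $r=n=N=R$ with $r\equiv 1\bmod 3$, one of the other two terms is $\le \thirdfloor{n+N}$ \emph{and} is realized at a vertex of type $(a,b,c,d)$ with $b=0$ (all robots on the $n$-side), whose link $\Delta_{r,N}\star\Delta_{0,\,n-r}=\Delta_{r,N}$ is a single chessboard complex, so that the minimum is attained by a non-join and Theorem~\ref{thm:ConnChess} applies directly.

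\textbf{Step 1: Reduce to when $\thirdfloor{n+N}$ is the unique minimizer realized only by joins.} By the discussion preceding the lemma, the ``no floor'' value $r$ is always attained at the all-on-one-side configuration (whose link is $\Delta_{r,N}$), and likewise the ``one floor'' value $\thirdfloor{r+n+1}$ is attained when $a=0$, i.e.\ again at an all-on-one-side configuration (link $\Delta_{r,N}$). So if $\ell = r$ or $\ell = \thirdfloor{r+n+1}$, the minimum is already realized by a single chessboard complex and no non-trivial join is needed. Hence for the hypothesis of the lemma to hold we must have $\ell = \thirdfloor{n+N}$ \emph{strictly} less than both $r$ and $\thirdfloor{r+n+1}$ — otherwise we could again use the all-on-one-side vertex. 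Thus I assume
\[
\thirdfloor{n+N} < r \quad\text{and}\quad \thirdfloor{n+N} < \thirdfloor{r+n+1}.
\]

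\textbf{Step 2: Extract the numerical constraints.} From $\thirdfloor{n+N} < r$ and $n+N = r+R$ we get $r+R < 3r$, i.e.\ $R < 2r$; combined with $r\le n\le N\le R$ this squeezes the parameters. From $\thirdfloor{n+N} < \thirdfloor{r+n+1}$, i.e.\ roughly $n+N < r+n+1$, we get $N \le r$ (being a little careful with floors and the $+1$), and then $r\le n\le N\le r$ forces $r=n=N$, hence also $R = n+N-r = r$. So $r=n=N=R$. It remains to show $r\equiv 1\bmod 3$ in this case; if instead $r\equiv 0$ or $2\bmod 3$, I claim the minimum is still attained by an all-on-one-side vertex. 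When $r=n=N=R$, the all-robots-on-one-side vertex has link $\Delta_{r,r}$ with connectivity parameter $\nu_{r,r}=\min\{r,\thirdfloor{2r+1}\}$, while a mixed vertex of type $(a,b,c,d)$ with $b+c\equiv 1\bmod 3$ gives, via Corollary~\ref{cor:threecases}, the value $\thirdfloor{2r}=\thirdfloor{n+N}$. A short comparison of $\thirdfloor{2r+1}$ with $\thirdfloor{2r}$ across residues of $r$ mod $3$ shows they agree unless $r\equiv 1\bmod 3$ (where $\thirdfloor{2r+1}$ exceeds $\thirdfloor{2r}$ by one), so only in the residue $1$ case can the join strictly beat every single chessboard complex; otherwise the all-on-one-side vertex already achieves the minimum. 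This pins down $r\equiv 1\bmod 3$.

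\textbf{The main obstacle.} The delicate part is Step~2: the inequalities involve floor functions and the stray $+1$, so I must be careful that $\thirdfloor{n+N}<\thirdfloor{r+n+1}$ genuinely forces $N\le r$ rather than $N\le r+c$ for some slack $c$, and that the residue bookkeeping in the $r=n=N=R$ case is watertight. There is also a subtlety I should double-check: when $r=n=N=R$ I must verify that a vertex with $b+c\equiv 1\bmod 3$ and \emph{both} chessboard factors non-empty actually exists (e.g.\ $a=d$, $b=c$ with $b$ chosen so $2b\equiv 1$, using $r\ge 2$), so that the ``non-trivial join'' hypothesis of the lemma is not vacuous; this is where the standing assumption $\min\{r,R,n,N\}\ge 2$ is used, exactly as in the proof of Claim~\ref{claim:2floors}. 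Once the arithmetic is nailed down the argument is essentially a case analysis, but getting the floor inequalities exactly right is the crux.
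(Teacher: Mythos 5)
Your proposal is correct and follows essentially the same route as the paper's proof: reduce to the case where $\thirdfloor{n+N}$ is strictly smaller than both $r$ and $\thirdfloor{r+n+1}$, deduce $N\le r$ and hence $r=n=N=R$ from the second inequality, and then compare $\thirdfloor{2r}$ with $\thirdfloor{2r+1}$ across residues of $r$ modulo $3$ to isolate $r\equiv 1\bmod 3$. The existence check for a genuinely non-trivial join that you flag at the end is handled in the paper within the proof of Theorem~\ref{thm:ExactBounds} (via $b=c=2$, $a=d=r-2$) rather than in the lemma itself, but your arithmetic is sound.
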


\begin{proof}
The only situation where we need to consider the connectivity of a non-trivial join of two chessboard complexes is when  $\thirdfloor{n+N}$ is strictly less than both $r$ and $\thirdfloor{r+n+1}$.  If $\thirdfloor{n+N}$ is strictly less than $\thirdfloor{r+n+1}$, then  $n+N < r+n+1$.  This inequality simplifies to $N \leq r$ which forces $r=n=N$ since Theorem~\ref{thm:ExactBounds} assumes $r \leq n \leq N \leq R$. Since $r+R=n+N$ we also have $R=n$, and thus all four values are equal. Eliminating $n$ and $N$, the three values being minimized simplify to $r$, $\thirdfloor{2r+1}$ and $\thirdfloor{2r}$ and it is clear that $\thirdfloor{2r} < r$ for all positive integers $r$. If $r\equiv 0$ or $2\mod{3}$, then $\thirdfloor{2r}=\thirdfloor{2r+1}$, the third entry is not the unique minimum value, and the minimum connectivity is realized by a link corresponding to a configuration where all of the robots are on one side.  If, however, $r\equiv 1\mod{3}$ then $2r+1\equiv 0\mod{3}$ while $2r\equiv 2\mod{3}$ so that $\thirdfloor{2r}<\thirdfloor{2r+1}$ as desired.
\end{proof}


We can now complete the proof of the main result of this section.

\begin{proof}[Proof of Theorem~\ref{thm:ExactBounds}]
Proposition \ref{prop:lower-bound} shows that the link of every vertex in $\config_r(n,N)$ is at least $(\ell - 2)$-connected. Thus, it remains to show there is a vertex whose link is not $(\ell-1)$-connected. When $\ell$ is either $r$ or $\thirdfloor{r+n+1}$, this minimum can be realized by a configuration with all of the robots on one side.  This means that the link of this vertex is a chessboard complex, not a join of chessboard complexes, and by Theorem \ref{thm:ConnChess} this link is not $(\ell-1)$-connected. 

If $\ell$ is solely achieved by the expression $\thirdfloor{n+N}$, then by Lemma~\ref{lem:exception}, $r=n=N=R$ and $r \equiv 1 $ mod 3, which implies that $a=d$ and $b=c$. If we choose $b=c=2$ and $a=d=r-2$, then $b+c \equiv 1$ mod 3 as required by Corollary~\ref{cor:threecases}. In this special case, the link of the associated vertex is $\Delta_{r-2,r-2}\star \Delta_{2,2}$. Since $\Delta_{2,2}$ is just two points, the full link is the suspension of $\Delta_{r-2,r-2}$, which means the connectivity has increased by exactly one.
\end{proof}

Given any positive integers $r$, $R$, $n$ and $N$ where $r+R = n+N$, the universal cover of the associated configuration space $\config_r(n,N)$ is isomorphic to the universal cover of a  configuration space where these variables are permuted so that they are in increasing order (see Corollary~\ref{cor:order}).  Thus we can remove the assumption that the parameters are ordered. Further, we can apply Theorem~\ref{thm:link-infinity} to convert the connectivity of vertex links to the connectivity at infinity of the universal cover.  These modifications transform Theorem~\ref{thm:ExactBounds} into our Main Theorem.

\begin{maintheorem}
Consider $r$ robots on a complete bipartite graph $K_{n,N}$. Let $R$ be the number of open vertices on $K_{n,N}$ so that $r+R=n+N$ is the total number of vertices.  Moreover, to avoid trivial cases, assume that $r$, $n$, $N$ and $R$ are all at least $2$.  Let $\ell_0 = \min\{r,R,n,N\}$, $\ell_1=\min\{r,R\}+\min\{n,N\}+1$, and $\ell_2=r+R=n+N$.  If 
\[\ell = \min\left\{
\ell_0, \thirdfloor{\ell_1}, \thirdfloor{\ell_2}
\right\},\]
then the universal cover of the combinatorial configuration space of $r$ robots on $K_{n,N}$ is $(\ell - 2)$-connected at infinity but not $(\ell - 1)$-connected at infinity.  
\end{maintheorem}

Although the space $\config_r(n,N)$ is rarely a manifold (Example~\ref{ex:NotMfd}), it is much more common for it to be a finite duality complex, which is a generalization of a compact manifold.  
Recall that when $X$ is a finite, aspherical cell complex with dimension $d$, and $\widetilde{X}$ is $(d-2)$-connected at infinity, then $X$ is said to be $d$-dimensional duality complex (see  \cite{geog}).  In our context, we are able to give a precise description of the conditions necessary for $\config_r(n,N)$ to be a duality complex.

\begin{corollary}[Duality complex]
Assume $2 \leq r \leq n \leq N$.  The configuration space $\config_r(n,N)$ is an $r$-dimensional duality complex if and only if $n \geq 2r-1$.
\end{corollary}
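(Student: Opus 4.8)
The plan is to unwind the definition of a duality complex in this specific setting. Recall that $\config_r(n,N)$ is a CAT(0) cube complex of dimension $r$ (since the cubes are products of at most $r$ edges, one per robot, and with $r \le n \le N$ there is room to move all $r$ robots simultaneously along disjoint edges, so cubes of dimension $r$ actually occur). So the space is a $d$-dimensional duality complex with $d = r$ precisely when its universal cover is $(r-2)$-connected at infinity. By Theorem~\ref{thm:link-infinity} and Theorem~\ref{thm:ExactBounds}, the connectivity at infinity is controlled by $\ell = \min\{r, \thirdfloor{r+n+1}, \thirdfloor{n+N}\}$: the universal cover is $(\ell-2)$-connected at infinity but not $(\ell-1)$-connected at infinity. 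Thus $\config_r(n,N)$ is an $r$-dimensional duality complex exactly when $\ell = r$, i.e. when both $\thirdfloor{r+n+1} \ge r$ and $\thirdfloor{n+N} \ge r$.

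The key step is then to show that the condition ``$\thirdfloor{r+n+1} \ge r$ and $\thirdfloor{n+N} \ge r$'' is equivalent to $n \ge 2r - 1$. First I would analyze $\thirdfloor{r+n+1} \ge r$: this holds iff $r + n + 1 \ge 3r$, i.e. iff $n \ge 2r - 1$. (Here one uses that $r+n+1 \ge 3r$ is equivalent to $\lfloor (r+n+1)/3 \rfloor \ge r$ since $r$ is an integer; if $r+n+1 < 3r$ then $r+n+1 \le 3r-1$ forces the floor to be at most $r-1$.) Next I would check that the first inequality forces the second: assuming $n \ge 2r-1$, we have $n + N \ge n + n = 2n \ge 2(2r-1) = 4r - 2 \ge 3r$ as soon as $r \ge 2$, so $\thirdfloor{n+N} \ge r$ holds automatically. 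Hence the pair of conditions collapses to the single condition $n \ge 2r - 1$.

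The main (and essentially only) obstacle is the bookkeeping with floor functions and making sure the ``$r \ge 2$'' hypothesis is used where needed — in particular in the step $4r - 2 \ge 3r \iff r \ge 2$, which is exactly where the standing assumption that all parameters are at least $2$ enters. One should also double-check the dimension claim $d = r$: the cube complex $\config_r(n,N)$ has cells indexed by collections of $r$ disjoint closed simplices, and a top-dimensional cube is a product of $r$ disjoint edges; with $r \le n \le N$ and all parameters at least $2$ such a configuration exists (e.g.\ pair up $r$ left vertices with $r$ right vertices, which is possible since $r \le n \le N$), so $\dim \config_r(n,N) = r$. Assembling these observations: $\config_r(n,N)$ is an $r$-dimensional duality complex $\iff \ell = r \iff n \ge 2r-1$, which is the desired statement.
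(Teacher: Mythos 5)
Your proposal is correct and follows essentially the same route as the paper's proof: reduce the duality condition to $\ell = r$, observe that $\thirdfloor{r+n+1} \geq r$ is equivalent to $n \geq 2r-1$, and then check that this inequality together with $N \geq n$ and $r \geq 2$ forces $\thirdfloor{n+N} \geq r$ as well. Your extra care in justifying the dimension claim and the floor-function equivalence is a harmless elaboration of steps the paper treats more briefly.
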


\begin{proof}
Given our ordered parameters, the dimension of $\config_r(n,N)$ is $r$.  So we need to show that the universal cover of $\config_r(n,N)$ is $(r-2)$-connected at infinity.  Thus by Theorem~\ref{thm:ExactBounds} we need 
\[\ell = \min\left\{r, \thirdfloor{r + n + 1}, \thirdfloor{n+N}\right\} = r.\]
In particular we need
\[ \thirdfloor{r + n + 1} \geq r, \] or equivalently \[ \thirdfloor{r + n + 1 - 3r} \geq 0.\]
Therefore it is necessary that $n \geq 2r - 1$.  

Since we are assuming $N \geq n$, if $n \geq 2r-1$ then $N  \geq 2r-1$, and so $n + N \geq 4r - 2$.  But $4r - 2 \geq 3r$ when $r \geq 2$, so
\[ 
\thirdfloor{n+ N} \geq \thirdfloor{4r - 2} \geq \thirdfloor{3r} = r, 
\]
which shows that $n+1 \geq 2r$ is also sufficient.
\end{proof}

\begin{remark}[Multipartite graphs] 
The reader should note that our argument relies heavily on the symmetry described in Corollary~\ref{cor:order} in order to simplify our analysis. This symmetry, however, does not immediately extend to complete multipartite graphs. It is also the case that links of $0$-cells in the configuration spaces for robots on a multipartite graph are more complicated than what occurs in the complete and complete bipartite cases.  Hence an analysis of the multipartite graph case will require  different ideas.
\end{remark}

\bibliographystyle{alpha}
\bibliography{references}

\end{document}